\newcommand{\myreferences}{/users/harpreet/Dropbox/Biblio/bibThesis}
\begin{document}

\title{$\deg \ds{Q}$ Algebraic Geometry}
\author{Harpreet Singh Bedi~~~{bedi@alfred.edu}}
\date{15 Aug 2019}
%\date{~}
\maketitle
\begin{abstract}
Elementary Algebraic Geometry can be described as study of zeros of polynomials with integer degrees, this idea can be naturally carried over to `polynomials' with rational degree.
 This paper explores affine varieties, tangent space and projective space for such polynomials and notes the differences and similarities between rational and integer degrees. The line bundles $\curly{O}(n),n\in\ds{Q}$ are also constructed and their \v{C}ech cohomology computed.
\end{abstract}
\tableofcontents
\pagebreak

The study of algebraic geometry begins with zeros of polynomials in a ring $k[X]$. In this tract it is shown that the ideas of algebraic geometry can be naturally carried over to polynomials of rational degree, for example $X^{1/2}-Y^2=0$. The ring of rational degree polynomials will be denoted as $k[X]_\ds{Q}$, where $k$ is algebraically closed and is thus of infinite cardinality.

In section \ref{rat-deg} the polynomial rings with rational degree are constructed via direct limit and affine algebraic sets are defined in section \ref{affine1}. These rings are Non Noetherian but they are B\'ezout and coherent as shown in lemma \ref{bezout} and proposition \ref{prop-coherent}.
\begin{lemma*}
  The ring $k[T]_{\ds{Q}}$ is a B\'ezout Domain.
\end{lemma*}
\begin{proposition*}
  Let $R$ be a noetherian ring then $R[T_1,\ldots,T_n]_{\ds{Q}}$ is coherent.
\end{proposition*}

The Nullstellensatz is proven in theorem \ref{nulstell}.

\begin{theorem*} Let $k$ be algebraically closed and uncountable and  $I\subsetneq k[T_1,\ldots,T_n]_{\ds{Q}}$ be a proper ideal.
 Then $V(I)$ is non empty.
\end{theorem*}

The lemmas and results corresponding to Noether Normalization are proved in section \ref{normalization}. Finite fields with degree $\ds{Z}[1/p]_{\geq 0}$ are introduced in section \ref{finite-field} and the following result for finite fields is proved in theorem \ref{finite-field-thm1}.

\begin{theorem*}
  Let $V\subset \ds{A}^n,W\subset A^m$ and $U\subset A^\ell$ be algebraic sets as defined in section \ref{finite-field}.
  \begin{enumerate}
     \item A polynomial map $\phi: V \ra W $ induces a $k$-algebra homomorphism $\phi^* : k[W]_{\ds{Z}[1/p]} \ra k[V]_{\ds{Z}[1/p]}$, defined by
composition of functions; that is, if $g \in k[W]_{\ds{Z}[1/p]}$ is a polynomial function then so is $\phi^* (g) = g \circ \phi$.
\item  If $\phi:V\ra W$ and $\varphi:W\ra U$ are polynomial maps then $(\varphi\circ\phi)^*=\phi^*\circ\varphi^*$.
\item If $\Phi:k[W]\ra k[V]$  then it is of the form $\Phi=\phi^*$ where $\phi:V\ra W$ is a unique polynomial map.
  \end{enumerate}
\end{theorem*}

Projective geometry begins in section \ref{proj-geo}, the line bundles $\curly{O}(n),n\in\ds{Q}$ are constructed in section \ref{sec-twist}, and their cohomology computed in theorem \ref{t1} and \ref{t2}.
\begin{theorem*}
  Let $S=R[X_0,\ldots, X_n]_{\ds{Q}}$ and $X=\Proj~S$, then for any $n\in\ds{Q}$
\begin{enumerate}
\item There is an isomorphism $S\simeq \oplus_{n\in\ds{Q}} H^0(X,\mathscr{O}_X(n))$.
\item $H^n(X,\curly{O}_{X}(-n-1))$ is a free module of infinite rank.
\end{enumerate}
\end{theorem*}

\begin{theorem*} Let $S=R[X_0,\ldots, X_n]_{\ds{Q}}$ and $X=\Proj~R$, then
$H^i(X,\curly{O}_X(m))=0$ if $0<i<n$.
\end{theorem*}

Tangent spaces are constructed in section \ref{tangent-space}. The applications are given in section \ref{sec-app}, these include embeddings of degree one, perfectoid algebras and fractional blow up algebras.

%\subsection*{Acknowledgment} I am grateful to Olivier Debarre for comments.

\section{Rational Degree}\label{rat-deg} The ring of polynomials with rational degrees can be constructed by formally attaching $X^{1/i}, i\in\ds{Z}_{>0}$ to the ring $k[X]$ and is denoted as $k[X]_{\ds{Q}}$. Since, all possible denominators $i\in\ds{Z}_{>0}$ are attached, all $X^{a/b} a, b\in\ds{Z}_{>0}$ lie in the ring $k[X]_{\ds{Q}}$.

It is also possible to use direct limit for construction of $k[X]_{\ds{Q}}$. The advantage of this construction is the fact that direct limit is an exact functor, thus short exact sequences in $k[X]$ can be carried over to $k[X]_{\ds{Q}}$.
\subsection{Rational degree via Direct Limit}
\begin{construction}
  Consider the following inclusions
  \begin{equation}\label{dir-lim-1-var}
    k[X]\subseteq k[X,X^{1/2}]\subseteq k[X,X^{1/2},X^{1/3}]\subseteq\ldots\subseteq\varinjlim_i k[X,\ldots, X^{1/i}]=\bigcup_i k[X,\ldots, X^{1/i}]
  \end{equation}
  The above construction can be carried onto multiple variables
  \begin{equation}
    \begin{aligned}
      k[X_1,\ldots, X_n]\subseteq k[X_1,X_1^{1/2},\ldots, X_n, X_n^{1/2}]\subseteq k[X_1,X_1^{1/2},X_1^{1/3},\ldots,X_n,X_n^{1/2},X_n^{1/3}]\subseteq\ldots\\
      \ldots\subseteq\varinjlim_i k[X_1,\ldots, X_1^{1/i},\ldots, X_n\ldots, X_n^{1/i}]=\bigcup_i k[X_1,\ldots, X_1^{1/i},\ldots,X_n,\ldots, X_n^{1/i} ]
    \end{aligned}
  \end{equation}

\end{construction}

\begin{define} The following notation will be used throughout
  \begin{equation}
    \begin{aligned}
      k[X]_\ds{Q}&=\bigcup_i k[X,\ldots, X^{1/i}]\\
      k[X_1,\ldots, X_n]_\ds{Q}&=\bigcup_i k[X_1,\ldots, X_1^{1/i},\ldots,X_n,\ldots, X_n^{1/i} ]
    \end{aligned}
  \end{equation}

\end{define}
Further constructions for multivariate case are as follows:

\begin{construction}

  \begin{enumerate}
    \item The multivariate case can also be obtained inductively. First construct the ring $k[X]_{\ds{Q}}$ and denote it as $R$ and attach attach $Y^{1/i}, i\in\ds{Z}_{>0}$ as in \eqref{dir-lim-1-var} to get
    \[R[Y]\subseteq R[Y,Y^{1/2}]\subseteq \ldots\subseteq\varinjlim_i R[Y,\ldots, Y^{1/i}]=\bigcup_i R[Y,\ldots, Y^{1/i}]=k[X,Y]_{\ds{Q}}.\]

    \item Construction via tensor products \begin{equation}
      k[X_1,\ldots,X_n]_\ds{Q}=k[X_1]_{\ds{Q}}\otimes_k\cdots\otimes_kk[X_n]_{\ds{Q}},
    \end{equation}
    where each $k[X_i]_{\ds{Q}}$ is constructed as in \eqref{dir-lim-1-var}. In fact, the above construction is an application of direct limit to tensor product and the observation that $\varinjlim (A\otimes B)=\varinjlim A\otimes\varinjlim B$.
  \[k[X,Y]_\ds{Q}=\varinjlim_i k[X,\ldots, X^{1/i}]\otimes_k \varinjlim_i k[Y,\ldots, Y^{1/i}]=\varinjlim_i k[X,\ldots, X^{1/i},Y,\ldots, Y^{1/i} ]\]

  \end{enumerate}

\end{construction}

\begin{lemma}\label{lem-change-var}
  For each $i$ the following inclusions hold
\begin{equation}
  k[X,\ldots,X^{1/i}]\subseteq k[Y]\text{ and }k[X_1,\ldots,X_1^{1/i}, \ldots, X_n,\ldots,X_n^{1/i}]\subseteq k[Y_1,\ldots, Y_n]
\end{equation}

\end{lemma}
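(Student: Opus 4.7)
The plan is to exhibit an explicit inclusion via a single change of variable. In the one-variable case, let $N = \operatorname{lcm}(1,2,\ldots,i)$ (or, to avoid any number-theoretic bookkeeping, just take $N=i!$). By construction every $j\in\{1,2,\ldots,i\}$ divides $N$, so the exponent $N/j$ is a positive integer. Define the map $k[X,X^{1/2},\ldots,X^{1/i}]\to k[Y]$ by sending
\[
X^{1/j}\longmapsto Y^{N/j},\qquad j=1,2,\ldots,i.
\]
Each generator lands in the monomial subring of $k[Y]$, so the image is contained in $k[Y]$.

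To see the assignment is a well-defined $k$-algebra homomorphism, recall that the source is generated by the $X^{1/j}$ subject to the relations $(X^{1/j})^{j}=X$ and the compatibility $(X^{1/(jk)})^{k}=X^{1/j}$. Both types of relations translate into the trivial identities $(Y^{N/j})^{j}=Y^{N}$ and $(Y^{N/(jk)})^{k}=Y^{N/j}$ in $k[Y]$, and are therefore preserved. Equivalently, one may think of the inclusion as the tautological identification $Y=X^{1/N}$, under which $X^{1/j}=(X^{1/N})^{N/j}=Y^{N/j}$. Injectivity is clear since distinct fractional powers of $X$ go to distinct positive powers of $Y$.

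For the multivariate statement, the same construction is applied to each variable independently. With the same $N$, send $X_m^{1/j}\mapsto Y_m^{N/j}$ for $m=1,\ldots,n$ and $j=1,\ldots,i$; equivalently, set $Y_m = X_m^{1/N}$. Since the fractional-power generators for different variables are algebraically independent over $k$, the resulting homomorphism $k[X_1,\ldots,X_1^{1/i},\ldots,X_n,\ldots,X_n^{1/i}]\to k[Y_1,\ldots,Y_n]$ is well-defined and injective, and its image sits inside the monomial subring.

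There is no real obstacle here; the statement is essentially combinatorial. The one thing to keep straight is the choice of $N$: it must be divisible by every $j\leq i$ so that $N/j$ is a non-negative integer, which is exactly what $\operatorname{lcm}(1,\ldots,i)$ or $i!$ guarantees.
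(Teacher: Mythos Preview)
Your proof is correct and follows essentially the same approach as the paper: the paper takes $\ell=\operatorname{lcm}(1,\ldots,i)$ and sends $X^{1/\ell}\mapsto Y$, which is exactly your substitution $Y=X^{1/N}$ (so $X^{1/j}\mapsto Y^{N/j}$), and then repeats this coordinatewise for the multivariate case. Your write-up is slightly more detailed in checking well-definedness and injectivity, but the underlying idea is identical.
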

\begin{proof}
  Let $\ell=$ least common multiple of $(1,\ldots, i)$ then $X^{1/\ell}\mapsto Y$ gives the first inclusion. The second inclusion is obtained from $X_j^{1/\ell}\mapsto Y_j$ for $1\leq j\leq n$.
\end{proof}

\begin{lemma}\label{finite-zero}
  Let $f\in k[T]_{\ds{Q}}$, then $f$ has only finitely many zeros.
\end{lemma}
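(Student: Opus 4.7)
The plan is to reduce the claim to the ordinary case $f \in k[Y]$ via Lemma \ref{lem-change-var}.

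First I would use the direct-limit description $k[T]_{\mathbb{Q}} = \bigcup_i k[T, T^{1/2},\ldots, T^{1/i}]$ to choose $i$ with $f \in k[T, T^{1/2},\ldots, T^{1/i}]$; such an $i$ exists because $f$ involves only finitely many monomials and each involves a rational exponent with bounded denominator. Then I would set $\ell = \operatorname{lcm}(1,2,\ldots,i)$ and invoke Lemma \ref{lem-change-var}, which provides an injection $k[T, T^{1/2},\ldots, T^{1/i}] \hookrightarrow k[Y]$ sending $T^{1/\ell} \mapsto Y$ (so $T \mapsto Y^{\ell}$). Call the image of $f$ under this embedding $g(Y) \in k[Y]$.

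Next I would appeal to the classical fact that $g \in k[Y]$ has at most $\deg g$ zeros in $k$, so in particular finitely many. To translate this back to $f$, I would unwind the definition: a zero of $f$ is a point $t \in k$ at which $f$ evaluates to zero under a compatible choice of fractional roots of $t$, and any such choice is equivalent to picking an $s \in k$ with $s^{\ell} = t$, after which the evaluation of $f$ at $t$ (with these roots) equals $g(s)$. Hence every zero $t$ of $f$ is of the form $t = s^{\ell}$ for some zero $s$ of $g$, and the map $\{\text{zeros of }g\} \to \{\text{zeros of }f\}$ given by $s \mapsto s^{\ell}$ is surjective. Since its domain is finite, so is its codomain.

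The only real subtlety is the bookkeeping in the last step — being explicit that evaluating $f(t)$ with a consistent choice of $\ell$-th roots agrees with $g(s)$ where $s$ is the chosen $\ell$-th root of $t$. This is routine once one writes $f$ as a polynomial in the single variable $T^{1/\ell}$ (which is permissible because $1/j \in \tfrac{1}{\ell}\mathbb{Z}_{\geq 0}$ for each $j \leq i$), but it is the one place where the argument differs from the classical one variable statement.
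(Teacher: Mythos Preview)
Your proposal is correct and follows essentially the same approach as the paper: reduce to the classical one-variable case by the substitution $T^{1/\ell}\mapsto Y$ (the paper takes $\ell$ to be the lcm of the denominators actually occurring in $f$, you take $\ell=\operatorname{lcm}(1,\ldots,i)$, which works equally well), apply the fact that $g\in k[Y]$ has finitely many zeros, and then recover the zeros of $f$ as $\ell$-th powers of zeros of $g$. Your version is slightly more explicit about the surjection $\{\text{zeros of }g\}\to\{\text{zeros of }f\}$, but the argument is the same.
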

\begin{proof}
  It is well known that a polynomial with integer degree has finitely many zeros. The idea is to convert any rational degree polynomial into an integer degree polynomial by substitution.
Find the least common multiple of the denominators of rational degree of $T$ and denote it by $d$. Make the substitution $T^{1/d}\mapsto X$. This will give a polynomial in $k[X]$ with integer degrees and solution say $X=a$, (since $k$ is assumed to be algebraically closed) which gives $T=a^d$. Thus, giving finitely many roots for $f$.

\end{proof}

Recall that  B\'ezout domain is an integral domain in which every finitely generated ideal is principal.

\begin{lemma}\label{bezout}
  The ring $k[T]_{\ds{Q}}$ is a B\'ezout Domain.
\end{lemma}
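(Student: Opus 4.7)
The plan is to reduce the statement to the fact that each finite stage $k[T,T^{1/2},\ldots,T^{1/i}]$ in the direct limit defining $k[T]_{\mathbb{Q}}$ is already a principal ideal domain, then observe that any finitely generated ideal lives at some finite stage.

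First I would check that $k[T]_{\mathbb{Q}}$ is a domain. Each intermediate ring $R_i=k[T,T^{1/2},\ldots,T^{1/i}]$ is isomorphic to $k[Y]$ by Lemma \ref{lem-change-var} via $T^{1/\ell}\mapsto Y$ with $\ell=\mathrm{lcm}(1,\ldots,i)$, hence is a polynomial ring over a field and in particular an integral domain. Since the inclusions $R_i\hookrightarrow R_{i+1}$ are injective and $k[T]_{\mathbb{Q}}=\bigcup_i R_i$, the direct limit is also a domain: any relation $fg=0$ in $k[T]_{\mathbb{Q}}$ lifts to a relation in some $R_i$, forcing $f=0$ or $g=0$ there.

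Next, given a finitely generated ideal $I=(f_1,\ldots,f_m)\subseteq k[T]_{\mathbb{Q}}$, each generator $f_j$ is a finite $k$-linear combination of monomials $T^{q}$ with $q\in\mathbb{Q}_{\geq 0}$, and so involves only finitely many denominators. Collecting these denominators over $j=1,\ldots,m$ and taking $i$ large enough, all $f_j$ lie in a common $R_i=k[T,\ldots,T^{1/i}]$. By Lemma \ref{lem-change-var} this $R_i$ is isomorphic to the PID $k[Y]$, so there exists $g\in R_i$ with $(f_1,\ldots,f_m)R_i=(g)R_i$. Concretely, $g=\sum_j a_j f_j$ for some $a_j\in R_i$ and each $f_j=b_j g$ for some $b_j\in R_i$. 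These identities remain valid after extending scalars to $k[T]_{\mathbb{Q}}$, so $(f_1,\ldots,f_m)=(g)$ in $k[T]_{\mathbb{Q}}$ as well.

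There is really no serious obstacle here; the only subtle point is the observation that a finitely generated ideal is automatically defined at a finite stage of the direct limit, which is what lets the PID property at each stage propagate to the B\'ezout property of the union. The same argument will underpin the coherence result of Proposition \ref{prop-coherent} in a slightly more elaborate form.
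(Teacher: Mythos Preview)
Your proof is correct and follows essentially the same strategy as the paper: locate the finitely many generators inside some finite stage $R_i$, use Lemma~\ref{lem-change-var} to identify $R_i$ with the PID $k[Y]$, and conclude that the ideal is principal. The paper's argument is terser and omits the explicit check that $k[T]_{\mathbb{Q}}$ is a domain and that the principal generator found in $R_i$ still generates the extended ideal in $k[T]_{\mathbb{Q}}$; your version spells out both of these points, which is an improvement in clarity but not a difference in method.
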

\begin{proof}
First, notice that $k[T]_{\ds{Q}}$ is not a principal ideal domain, since the ideal $\lr{X,X^{1/2},X^{1/3},\ldots}$ cannot be generated by a single element. Let $I$ be a finitely generated ideal of $k[T]_{\ds{Q}}=\bigcup_i k[T^{1/i}]$, the finiteness of $I$ forces it to lie in one of the rings $k[T^{1/i}]$ which by a change of variable $T^{1/i}\mapsto X$ is a principal ideal domain $k[X]$, thus the ideal $I$ is generated by a single element.
\end{proof}

\begin{proposition}\label{prop-coherent}
  Let $R$ be a noetherian ring then $R[T_1,\ldots,T_n]_{\ds{Q}}$ is coherent.
\end{proposition}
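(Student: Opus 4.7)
The plan is to realize $R[T_1,\ldots,T_n]_{\ds{Q}}$ as a filtered colimit of Noetherian (hence coherent) rings along flat transition maps, and then invoke the general principle that such a colimit is coherent.

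First I would write
\[ A := R[T_1,\ldots,T_n]_{\ds{Q}} \;=\; \varinjlim_i A_i, \qquad A_i := R[T_1,T_1^{1/2},\ldots,T_1^{1/i},\ldots,T_n,\ldots,T_n^{1/i}]. \]
By Lemma \ref{lem-change-var}, setting $L_i = \mathrm{lcm}(1,\ldots,i)$, the substitution $T_j^{1/L_i}\mapsto Y_j$ identifies each $A_i$ with the usual polynomial ring $R[Y_1,\ldots,Y_n]$. Since $R$ is Noetherian, Hilbert's basis theorem makes every $A_i$ Noetherian, and every Noetherian ring is coherent.

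Next I would verify that each transition map $A_i \hookrightarrow A_{i+1}$ is flat. Letting $m = L_{i+1}/L_i$ and using the substitution $T_j^{1/L_{i+1}}\mapsto U_j$, the inclusion becomes $R[U_1^m,\ldots,U_n^m] \hookrightarrow R[U_1,\ldots,U_n]$, and the latter is free over the former of rank $m^n$ with basis $\{U_1^{a_1}\cdots U_n^{a_n} : 0 \le a_k < m\}$. Hence every transition map, and consequently every structure map $A_i \to A$, is flat (even faithfully flat).

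Finally I would invoke the following colimit principle: given any $A$-linear map $f: A^k \to A$ determined by $(b_1,\ldots,b_k) \in A^k$, choose $i$ so large that every $b_\ell$ lies in $A_i$, producing $f_i : A_i^k \to A_i$ with $f = f_i \otimes_{A_i} A$. By flatness of $A_i \to A$ one has $\ker f = (\ker f_i) \otimes_{A_i} A$, and since $\ker f_i$ is finitely generated by coherence of $A_i$, so is $\ker f$. This is precisely the coherence criterion for $A$. The step requiring the most care is the flatness of the transition maps, but via the change-of-variable picture it reduces to the elementary freeness of $R[U]$ over $R[U^m]$; once that is in place, the rest of the argument is purely formal.
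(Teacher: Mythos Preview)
Your argument is correct and follows the same line as the paper's: descend the data of a finitely generated ideal to one of the Noetherian rings $A_i$ in the direct system, obtain a finite presentation there, and push it back up to $A$. The only difference is that you make the flatness of $A_i\to A$ explicit (via the freeness of $R[U_1,\ldots,U_n]$ over $R[U_1^m,\ldots,U_n^m]$), whereas the paper simply writes ``applying the direct limit'' and leaves this justification implicit; your version is the more careful one on exactly that point.
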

\begin{proof}
Let $\id{J}=(f_1,\ldots, f_r)$ be a finitely generated ideal in the ring, then finite generation of $\id{J}$ forces it to lie in the ring $B_i$ for $i$ large enough, where
\begin{equation}
  B_i=R[T_1,\ldots,T_1^{1/i},\ldots,T_n,\ldots,T_n^{1/i}] \quad\text{ and }\quad\varinjlim_iB_i=R[T_1,\ldots,T_n]_{\ds{Q}}.
\end{equation}
Since, $B_i$ is Noetherian there is a finite presentation of the form
\begin{equation}
  B_i^{m_1}\ra B_i^{m_2}\ra\id{J}\ra 0.
\end{equation}
Applying the direct limit to the above gives the required result.
\begin{equation}
  (R[T_1,\ldots,T_n]_{\ds{Q}})^{m_1}\ra (R[T_1,\ldots,T_n]_{\ds{Q}})^{m_2}\ra\id{J}\ra 0.
\end{equation}
\end{proof}

\begin{remark}
  The above lemmas show that each of the ring $k[X,\ldots,X^{1/i}]$ is noetherian but the ring $k[X]_\ds{Q}$ is non-noetherian. Consider the non terminating chain of ideals $(X)\subsetneq (X^{1/2})\subsetneq\ldots\subsetneq(X^{1/2^i})\subsetneq\ldots $
\end{remark}

\section{Affine Algebraic Sets}\label{affine1}

Fix a field $k$ and an integer $n$ and let $A:=k[T_1,\ldots, T_n]_{\ds{Q}}$ be the ring of polynomials with rational degrees in $n$ indeterminates and coefficients in $k$.
If $F(T_1,\ldots,T_n)\in A$, a point $x=(x_1,\ldots, x_n)\in k^n$ is a zero of $F$ if $F(x_1,\ldots,x_n)=0$.

\begin{define}
Let $S$ be a subset of $k[T_1,\ldots, T_n]_{\ds{Q}}$ and let $V(S)$ denote the subset of $k^n$ formed by common zeros of all elements of $S$. The subsets of $k^n$ of this type will be called affine algebraic set defined by $S$.
\begin{equation}
  V(S)=\{x\in k^n\text{ such that }F(x)=0\text{ for all }F\in S\}.
\end{equation}
\end{define}

The affine algebraic sets form the closed sets of the Zariski Topology on $k^n$, some of its properties are listed below.

\begin{remark}Let $A=k[T_1,\ldots,T_n]_\ds{Q}$
\begin{enumerate}
    \item $V$ is inclusion reversing, if $S_1\subseteq S_2\subseteq A$ then $V(S_1)\supseteq V(S_2)$.
  \item Multiple polynomials can define the same affine algebraic set. For example, $V(T^2)=V(T)=V(T^{1/2})$.
  \end{enumerate}
\end{remark}

\begin{define}
  Let $\id{a}$ be an ideal of $k[T_1,\ldots, T_n]_{\ds{Q}}$, then the set $V(\id{a})$ will be called the closed sets of $k^n$.
\end{define}

\begin{proposition}The closed sets define the Zariski topology.
\end{proposition}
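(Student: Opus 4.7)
The plan is to verify the three closed-set axioms: $\emptyset$ and $k^n$ are both of the form $V(\id{a})$, arbitrary intersections of such sets remain of that form, and finite unions likewise remain of that form. The first axiom is immediate: $V((0)) = k^n$ because the zero polynomial vanishes everywhere, and $V(A) = \emptyset$ because the unit $1 \in A$ vanishes nowhere. For arbitrary intersections, I would establish the identity
\[\bigcap_{\alpha} V(\id{a}_\alpha) \;=\; V\Bigl(\textstyle\sum_\alpha \id{a}_\alpha\Bigr),\]
which follows directly from the fact that a point $x \in k^n$ kills every element of each $\id{a}_\alpha$ precisely when it kills every finite sum $\sum f_\alpha$ with $f_\alpha \in \id{a}_\alpha$, the sum ideal being by definition the collection of all such finite sums.

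For finite unions, it suffices to consider two ideals $\id{a}, \id{b}$, and I would prove $V(\id{a}) \cup V(\id{b}) = V(\id{a}\id{b})$. The inclusion $\subseteq$ is routine: every element of $\id{a}\id{b}$ is a finite sum of products $fg$ with $f \in \id{a}$, $g \in \id{b}$, and such a product vanishes at any point of $V(\id{a}) \cup V(\id{b})$. For the reverse inclusion, if $x \notin V(\id{a}) \cup V(\id{b})$, I would choose $f \in \id{a}$ and $g \in \id{b}$ with $f(x), g(x) \neq 0$; then $fg \in \id{a}\id{b}$ satisfies $(fg)(x) = f(x)g(x) \neq 0$ because $k$ is a field, so $x \notin V(\id{a}\id{b})$.

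The only step requiring genuine thought is the use of the integral domain property of $k$ in the reverse inclusion for finite unions; the remainder of the argument is purely formal and is completely insensitive to whether the exponents in $A$ are integers or rationals, because it uses only the ring operations and the multiplicativity/additivity of evaluation. In particular, nothing in the proof appeals to $A$ being Noetherian, so the fact that $k[T_1,\ldots,T_n]_{\ds{Q}}$ is merely coherent (proposition \ref{prop-coherent}) causes no difficulty here.
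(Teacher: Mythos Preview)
Your proposal is correct and follows essentially the same route as the paper: both verify the three closed-set axioms via $V(0)=k^n$, $V(1)=\varnothing$, the identity $\bigcap_\alpha V(\id{a}_\alpha)=V(\sum_\alpha \id{a}_\alpha)$, and $V(\id{a})\cup V(\id{b})=V(\id{a}\id{b})$ proved by the same ``pick $f\in\id{a},\,g\in\id{b}$ with nonzero values and multiply'' argument. The only cosmetic difference is that the paper also records the intermediate equality $V(\id{a}\id{b})=V(\id{a}\cap\id{b})$ and phrases the reverse inclusion as a direct implication rather than a contrapositive.
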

\begin{proof} Let $A=k[T_1,\ldots,T_n]_\ds{Q}$
  \begin{enumerate}
    \item $V(0)=k^n$ and $V(1)=\varnothing$.
    \item $V(\id{a}\id{b})=V(\id{a}\cap\id{b})=V(\id{a})\cup V(\id{b})$ for $\id{a},\id{b}\in A$.\\
     Since, $\id{ab}\subseteq \id{a}\cap\id{b}\subseteq \id{a,b}$ applying $V(-)$ reverses the inclusion $V(\id{ab})\supseteq V(\id{a}\cap\id{b})\supseteq V(\id{a})\cup V(\id{b}).$ On the other hand, if $x\notin V(\id{a})$ and $x\in V(\id{a}\id{b})$, there exists $a\in\id{a}$ such that $a(x)\neq 0$ and for all $b\in\id{b}, a(x)b(x)=0$ implying $b(x)=0$ or $x\in V(\id{b})$.
    \item For a family of ideal $\id{a}_i\in A$ with $i\in I$ the following holds
    \begin{equation}
      \bigcap_{i\in I}V(\id{a}_i)=V\left(\bigcup_{i\in I} a_i\right)=V\left(\sum_{i\in I}\id{a}_i\right)
    \end{equation}
  \end{enumerate}
\end{proof}

\begin{remark}\label{maximal-ideal}
Recall that $\ds{C}[X]/(X-a)=\ds{C}$ for $a\in\ds{C}$, this can be re-written as
\begin{equation}
\frac{\ds{C}[X]}{(X-a),(X^2-a^2),\cdots ,(X^i-a^i),\cdots}  =\frac{\ds{C}[X]}{X-a}=\ds{C}\text{ for }i\in\ds{Z}_{>0}.
\end{equation}
The first equality comes from the fact that $(X-a)\supset (X^2-a^2)\supset\cdots \supset(X^i-a^i)\supset\cdots $. The above can be adapted for $\ds{C}[X]_\ds{Q}$ as
\begin{equation}
  \frac{\ds{C}[X]_\ds{Q}}{(X-a),\ldots, (X^{1/i}-a^{1/i}),\ldots}=\ds{C}\text{ for }i\in\ds{Z}_{>0},
\end{equation}
giving an example of a infinitely generated maximal ideal in the ring $\ds{C}[X]_{\ds{Q}}$. The same argument works for any algebraically closed field $k$. Hence, for any evaluation map at
$a\in k$ the associated maximal ideal in $k[X]_{\ds{Q}}$ is generated by the family $\{(X^{1/i}-a^{1/i})\}_{i\in\ds{Z}_{>0}}$. Similarly, the maximal ideal associated to evaluation at $(a_1,\ldots,a_n)\in k^n$ in the ring $k[T_1,\ldots, T_n]_{\ds{Q}}$ is generated by the family $\{(T_1^{1/i}-a_1^{1/i}),\ldots,(T_n^{1/i}-a_n^{1/i}) \}_{i\in\ds{Z}_{>0}}$.

For example, in the ring $k[X]_{\ds{Q}}$ the evaluation map at $a=0$ has the corresponding maximal ideal as
\begin{equation}
  \id{q}:=\lr{X,X^{1/2},X^{1/3},\ldots}=X^{1/i}_{i\in\ds{Z}_{>0}}\qquad{\text{and note that}}\qquad \id{q}^2=\id{q}.
\end{equation}

\textdbend It is necessary to have $k$ as algebraically closed, so that all roots of $a\in k$ can be obtained by solving equations of the form $X^i-a=0,i\in\ds{Z}_{>0}$. For example $2\in\ds{Q}$ and the corresponding ideal would be  $\{(X^{1/i}-2^{1/i})\}_{i\in\ds{Z}_{>0}}$ in the ring $\ds{Q}[X]_{\ds{Q}}$. But this gives
\begin{equation}
  \frac{\ds{Q}[X]_{\ds{Q}}}{\{(X^{1/i}-2^{1/i})\}_{i\in\ds{Z}_{>0}}}=\ds{Q}[2^{1/i}]_{i\in\ds{Z}_{>0}}\supsetneq \ds{Q}.
\end{equation}
Notice that $\ds{Q}[2^{1/i}]_{i\in\ds{Z}_{>0}}$ is a field formed by union of field inclusions $\ds{Q}\subset\ds{Q}[2^{1/2}]\subset \ds{Q}[2^{1/2},2^{1/3}]\subset\ldots$.
The standard argument of Zorn's Lemma shows that every proper ideal of the ring $k[T_1,\ldots, T_n]_{\ds{Q}}$ is contained within a maximal ideal $\id{m}$.
\end{remark}

\subsection{Ideal}
 The affine algebraic set $V$ can be defined by different polynomials of $A$. There exists a natural way to assign an ideal in the ring $A$ to the set $V$.
\begin{define}
  Let $V$ be a subset of $k^n$; the ideal of $V$, denoted as $I(V)$, is the set of polynomials vanishing on $V$.
  \begin{equation}
    I(V)=\{f\in k[T_1,\ldots,T_n]_{\ds{Q}}\text{ such that }f(x)=0\text{ for all }x\in V\}.
  \end{equation}
  The regular functions on $V$ are defined as $k[T_1,\ldots,T_n]_{\ds{Q}}/I(V)$ also called the affine algebra of $V$ and is denoted as $k[V]_{\ds{Q}}$ or $\Gamma(V)$ (global sections).
\end{define}
Some properties are listed below, these are all analogous to the case of standard algebraic geometry.
\begin{enumerate}
  \item $V\subset V(I(V))$ and $V(I(V))$ is the closure of $V$ in the Zariski topology. If $V$ is affine algebraic set then $V(I(V))=V$.
  \item $S\subset I(V(S))$.
  \item $I(\varnothing)=A$
\end{enumerate}

\begin{proposition}
  For all irreducible affine algebraic sets, it is necessary and sufficient that its ideal is prime.
\end{proposition}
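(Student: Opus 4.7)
The plan is to prove the biconditional by the standard Zariski-topology argument, adapted to the rational-degree setting. The key formal tools are: (i) $V(I(V)) = V$ whenever $V$ is an affine algebraic set (noted just before the proposition), (ii) $V(\id{a}) \cup V(\id{b}) = V(\id{a} \cap \id{b})$ from the earlier closed-set computation, and (iii) the inclusion-reversing nature of the operator $I(-)$. None of these depend on Noetherianity, so the shift from $k[T_1,\ldots,T_n]$ to $k[T_1,\ldots,T_n]_{\ds{Q}}$ causes no trouble.

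For the forward direction, I would assume $V$ is irreducible and take $fg \in I(V)$. Then $V \subseteq V(fg) = V(f) \cup V(g)$, so
\[
V = \bigl(V \cap V(f)\bigr) \cup \bigl(V \cap V(g)\bigr),
\]
a union of two closed subsets of $V$. Irreducibility forces one of these to equal $V$; without loss of generality $V \subseteq V(f)$, which means $f \in I(V)$. Hence $I(V)$ is prime.

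For the reverse direction, assume $I(V)$ is prime and suppose, for contradiction, $V = V_1 \cup V_2$ with each $V_i$ a proper closed subset of $V$. Then $I(V) = I(V_1) \cap I(V_2)$, and the inclusions $V_i \subsetneq V$ together with $V(I(V_i)) = V_i$ yield $I(V) \subsetneq I(V_i)$ for $i=1,2$. Pick $f_i \in I(V_i) \setminus I(V)$; then $f_1 f_2$ vanishes on $V_1 \cup V_2 = V$, so $f_1 f_2 \in I(V)$, while neither factor lies in $I(V)$, contradicting the primality of $I(V)$.

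I do not foresee a real obstacle: the argument is purely topological/ideal-theoretic and uses only the Galois-style correspondence $V \leftrightarrow I(V)$ between closed sets and ideals on the algebraic-set side. The only thing to be mildly careful about is that the decomposition $V = V_1 \cup V_2$ in the reverse direction is in terms of closed subsets of $V$ (equivalently, affine algebraic subsets of $k^n$ contained in $V$), so that $V(I(V_i)) = V_i$ may be invoked; this is exactly the definition of irreducibility in the Zariski topology inherited from $k^n$.
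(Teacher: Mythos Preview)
Your argument is correct and is precisely the standard proof the paper has in mind: the paper's own ``proof'' merely says the argument is word for word the same as in Reid's \emph{Undergraduate Algebraic Geometry}, p.~61, which is exactly the two-direction Zariski-topology argument you wrote out. There is no difference in approach.
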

\begin{proof}
  The proof is word for word the same as in \cite[pp. 61]{reid1988undergraduate}
\end{proof}

The above proposition can be expressed by saying that $A/I(V)$ is integral.

\begin{corollary}
  If $k$ is infinite, then $k^n$ is irreducible.
\end{corollary}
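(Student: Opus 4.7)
The plan is to reduce to the classical statement that a polynomial of integer degrees vanishing on $k^n$ for $k$ infinite must be zero. The bridge to the rational-degree setting is supplied by Lemma \ref{lem-change-var}, combined with the fact (implicit throughout the paper) that $k$ is algebraically closed, so that $i$-th roots always exist.

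First I would observe that $A := k[T_1,\ldots,T_n]_{\ds{Q}}$ is an integral domain. Indeed, $A = \bigcup_i k[T_1^{1/i},\ldots,T_n^{1/i}]$ is a directed union of polynomial rings over the field $k$, each of which is a domain, and a directed union of domains under inclusions is again a domain. Hence the zero ideal $(0)\subset A$ is prime. By the preceding proposition, which identifies irreducible affine algebraic sets with those whose ideal is prime, it suffices to show $I(k^n) = (0)$.

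To that end, let $f \in I(k^n)$. By finiteness of the exponents appearing in $f$, there is some $i$ with $f \in k[T_1^{1/i},\ldots,T_n^{1/i}]$. Via the substitution $T_j^{1/i} \mapsto Y_j$ furnished by Lemma \ref{lem-change-var}, $f$ corresponds to a polynomial $g \in k[Y_1,\ldots,Y_n]$ of integer degree. Because $k$ is algebraically closed, the $i$-th power map $k \to k$, $b \mapsto b^i$, is surjective, so every tuple $(a_1,\ldots,a_n) \in k^n$ is of the form $(b_1^i,\ldots,b_n^i)$. For any such tuple, $g(b_1,\ldots,b_n)$ coincides with the evaluation $f(a_1,\ldots,a_n)$, which by hypothesis is $0$. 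As $(b_1,\ldots,b_n)$ ranges over all of $k^n$, we conclude that $g$ vanishes identically on $k^n$. Since $k$ is infinite, the classical result forces $g = 0$, and therefore $f = 0$.

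The main obstacle is the evaluation subtlety: strictly speaking, $T^{1/i}$ has $i$ possible values at a point $a \in k$, so the identification of $f$ with $g$ as functions on $k^n$ needs a coherent choice of $i$-th roots. This is exactly where algebraic closure enters; without it, one could not surject $k^n$ onto itself through the substitution, and the reduction to the integer-degree classical fact would fail. Once this point is handled, everything else is routine.
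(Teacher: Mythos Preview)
Your argument is correct and follows the same route as the paper: show $I(k^n)=(0)$ by converting a rational-degree polynomial to an integer-degree one via the substitution of Lemma~\ref{lem-change-var} and invoking the classical fact for infinite $k$, then conclude irreducibility from primality of $(0)$. You supply more detail than the paper does---in particular, you make explicit that $A$ is a domain (so $(0)$ is prime) and you flag the evaluation subtlety about choosing $i$-th roots, which the paper leaves implicit under its standing assumption that $k$ is algebraically closed.
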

\begin{proof}
Since, $k$ is infinite, the polynomials vanishing on all of $k^n$ is just zero. This gives $I(k^n)=(0)$ a prime ideal. The first sentence follows from the fact that every polynomial with rational degree can be converted to an integer degree polynomial by a change of variable, and the corresponding statement holds for integer degree polynomials as shown in  \cite[pp 12, Prop 2.4]{maclean2007algebraic}.
\end{proof}

\subsection{Nullstellensatz}
In this section the weak Nullstellensatz is proved by adapting \cite[p 17-18]{beltrametti2009lectures} and \cite[pp 15]{maclean2007algebraic}.
Let $k$ be an uncountable algebraically closed field and $A:=k[T_1,\ldots, T_n]_\ds{Q}$.
\begin{theorem}\label{nulstell} Let $k$ be algebraically closed and uncountable and  $I\subsetneq k[T_1,\ldots,T_n]_{\ds{Q}}$ be a proper ideal.
 Then $V(I)$ is non empty.
\end{theorem}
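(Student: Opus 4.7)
The plan is to mimic the standard uncountable-field proof of the Nullstellensatz, with the observation that $A := k[T_1,\ldots,T_n]_{\mathbb{Q}}$, while not Noetherian, is still a countable-dimensional $k$-algebra, so the dimension-counting argument survives verbatim. By Zorn's lemma (as noted in Remark \ref{maximal-ideal}), I may enlarge $I$ to a maximal ideal $\mathfrak{m}$ and work with the field $K = A/\mathfrak{m}$, which contains $k$. If I can show $K = k$, then, writing $a_{i,m} \in k$ for the image of $T_i^{1/m}$ under $A \to K = k$, the relations $(a_{i,m})^m = a_{i,1}$ automatically hold, and every $f \in I \subseteq \mathfrak{m}$, which involves only finitely many of the $T_i^{1/m}$, will vanish at the point $(a_{1,1},\ldots,a_{n,1}) \in k^n$ with this distinguished choice of roots. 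Hence $V(I) \neq \varnothing$.

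The key step is therefore to prove $K = k$. First I would record that $A$ has a countable $k$-basis, namely the monomials $T_1^{q_1}\cdots T_n^{q_n}$ with $q_i \in \mathbb{Q}_{\geq 0}$; this is simply because $\mathbb{Q}_{\geq 0}^n$ is countable. Consequently the quotient $K$ has at most countable dimension over $k$. Now suppose for contradiction that some $\alpha \in K$ does not lie in $k$. Since $k$ is algebraically closed, $\alpha$ cannot be algebraic over $k$, so it is transcendental. Then the family $\{(\alpha - c)^{-1} : c \in k\}$ inside $K$ is $k$-linearly independent by the usual partial-fractions argument, so $\dim_k K \geq |k|$. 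But $|k|$ is uncountable, contradicting the countable-dimension bound. Therefore $K = k$.

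Assembling the pieces: extend $I$ to $\mathfrak{m}$, read off the images of $T_i^{1/m}$ in the quotient to produce the desired point, and check vanishing of any $f \in I$ at that point. The main obstacle I anticipate is purely bookkeeping, namely verifying that the images $a_{i,m}$ give a coherent compatible system of $m$-th roots of $a_{i,1}$ so that evaluation of a rational-degree polynomial is unambiguous. This is automatic from the fact that $A \twoheadrightarrow K$ is a ring homomorphism: the identities $(T_i^{1/m})^m = T_i$ and $(T_i^{1/mn})^n = T_i^{1/m}$ force $(a_{i,m})^m = a_{i,1}$ and $(a_{i,mn})^n = a_{i,m}$, so the chosen roots match the setup of Remark \ref{maximal-ideal} and the maximal ideal $\mathfrak{m}$ is precisely the evaluation ideal at this point. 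No use of Noether normalization or of the Noetherian hypothesis is required; the argument leans only on the countable $k$-dimension of $A$ and on the uncountability and algebraic closedness of $k$.
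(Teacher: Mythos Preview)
Your proposal is correct and follows essentially the same approach as the paper: both reduce to a maximal ideal via Zorn, exploit the countable $k$-dimension of $A$, and use the uncountable family $\{(\alpha-c)^{-1}:c\in k\}$ to force the quotient field to equal $k$. Your contrapositive phrasing (transcendental $\alpha$ gives a linearly independent family via partial fractions) is exactly the flip side of the paper's argument (linear dependence yields an algebraic relation), and your explicit treatment of the compatibility of the roots $a_{i,m}$ is a useful addition that the paper leaves implicit.
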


\begin{proof}
Using Zorn's Lemma the ideal $I\subsetneq A$ is contained within some maximal ideal $\id{m}$ of $A$. Let $B=A/\id{m}$ where $\id{m}$ is a maximal ideal and thus $B$ is a field. Since, $B$ is generated over $k$ by the monomials in $T_1,\ldots, T_n$ and their rational powers, the dimension $\dim_kB$ (as a $k$ vector space) is at most countable over $k$. Pick $b\in B\bs k$ and consider the family of elements
\begin{equation}
  \left\{\frac{1}{a-b}\right\},\quad a\in k \quad\text{ with $b$ fixed}.
\end{equation}
This family is uncountable because $k$ is uncountable. Since, $\dim_kB$ is countable, the elements of the family are linearly dependent.
Hence, there exists a relationship of the form
\begin{equation}
  \frac{\lambda_1}{a_1-b}+\ldots +\frac{\lambda_j}{a_j-b}=0\text{ for some }j\in\ds{Z}_{>0}
\end{equation}
Multiplying throughout by $\prod_i(a_i-b)$ and setting $a_i=X$ gives a polynomial $f(X)\in k[X]$ of degree $>0$ such that $f(b)=0$, showing that $b$ is algebraic over $k$. Since this holds for any arbitrary $b\in B$, the field $B$ is algebraic over $k$. But, $k$ is algebraically closed, hence $B=k$.

Consider the images of $T_1,\ldots,T_n$ in the field $B=k$, given as $a_1,\ldots,a_n$, in other words  \begin{equation}
  a_j^{1/i}:=T_j^{1/i}\mod\id{m}\qquad i\in\ds{Z}_{>0}.
\end{equation}
If $P(T_1,\ldots, T_n)\in\id{m}$ then $P(a_1,\ldots, a_n)=0$ or the point $(a_1,\ldots,a_n)\in k^n$ is in $V(I)$.
\end{proof}
The following corollary requires finite generation of the ideal $J$.
\begin{corollary}
 Let $J$ be a finitely generated ideal of $A$,then $I(V(J))=\rad(J)$.
\end{corollary}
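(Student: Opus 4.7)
The plan is to reduce to the classical Hilbert Nullstellensatz by shrinking into a Noetherian subring in which everything is visibly a polynomial ring in the usual sense. The easy containment $\rad(J)\subseteq I(V(J))$ follows from $J\subseteq I(V(J))$ together with the fact that $k$ has no nilpotents: if $f^m$ vanishes at a point, so does $f$. The content lies in the reverse inclusion, so fix $f\in I(V(J))$ and write $J=(g_1,\ldots,g_r)$. Because $f$ and the finite list of generators all live in the direct limit $A=\varinjlim_i B_i$, there is a single index $j$ with $f,g_1,\ldots,g_r\in B_j:=k[T_1,\ldots,T_1^{1/j},\ldots,T_n,\ldots,T_n^{1/j}]$. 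By lemma \ref{lem-change-var} the substitution $Y_i:=T_i^{1/j}$ identifies $B_j$ with the ordinary polynomial ring $k[Y_1,\ldots,Y_n]$; write $\tilde f,\tilde g_i$ for the images and set $\tilde J:=(\tilde g_1,\ldots,\tilde g_r)\subset k[Y]$.

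The crux is to show that $\tilde f$ vanishes on $V(\tilde J)\subseteq k^n$. Given $y\in V(\tilde J)$, put $x:=(y_1^j,\ldots,y_n^j)$ and take $y_i$ as the chosen $j$-th root of $x_i$; algebraic closure of $k$ lets one extend to a compatible system of all higher fractional roots, producing an evaluation $A\to k$ at $x$ in the sense of remark \ref{maximal-ideal} under which each $g_i$ maps to $\tilde g_i(y)=0$. Hence $x\in V(J)$, so the hypothesis forces $\tilde f(y)=f(x)=0$. The classical Hilbert Nullstellensatz in $k[Y_1,\ldots,Y_n]$ (which uses only that $k$ is algebraically closed) now yields $\tilde f^m\in\tilde J$; transporting this back under the isomorphism gives $f^m\in(g_1,\ldots,g_r)B_j\subseteq J$, so $f\in\rad(J)$.

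The main obstacle is the bookkeeping in the previous paragraph: a point $x\in V(J)$ implicitly carries a consistent choice of fractional roots that determines its evaluation, and one has to confirm that the change of variables $T_i^{1/j}\mapsto Y_i$ correctly matches the two notions of ``vanishing at a point''. Algebraic closure of $k$ is what makes the root-extraction correspondence between $V(\tilde J)$ and $V(J)$ work and what powers the classical Nullstellensatz at the end; uncountability of $k$ is not needed for this corollary, since after the reduction to $B_j$ we are already in the Noetherian world.
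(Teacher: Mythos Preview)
Your argument is correct and takes a genuinely different route from the paper. The paper runs Rabinowitsch's trick directly in $A[t]$: it forms $J'=(J,ft-1)$, observes $V(J')=\varnothing$, invokes Theorem~\ref{nulstell} (the weak Nullstellensatz for $A$, which needs $k$ uncountable) to conclude $1\in J'$, and then clears powers of $f$ to land $f^N$ back in $J$. You instead descend to a single Noetherian level $k[T_1^{1/\ell},\ldots,T_n^{1/\ell}]\cong k[Y_1,\ldots,Y_n]$, apply the classical strong Nullstellensatz there, and lift the resulting relation $\tilde f^{\,m}=\sum h_i\tilde g_i$ back into $A$. The payoff is exactly what you note in your last line: your proof uses only that $k$ is algebraically closed, not that it is uncountable, so it actually yields a slightly stronger corollary than the paper's route does; the paper's approach has the minor advantage of staying entirely within the machinery already built. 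One small wording slip to fix: Lemma~\ref{lem-change-var} gives an \emph{inclusion} $B_j\hookrightarrow k[Y_1,\ldots,Y_n]$ via $T_i^{1/\ell}\mapsto Y_i$ with $\ell=\mathrm{lcm}(1,\ldots,j)$, not an identification via $Y_i=T_i^{1/j}$; but an inclusion is all your argument needs, since the relation produced by the classical Nullstellensatz already lives in $k[Y]\cong k[T_1^{1/\ell},\ldots,T_n^{1/\ell}]\subseteq A$.
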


\begin{proof}
The proof is via Rabinowitsch's trick as in \cite[pp. 25]{hulek2003elementary}.
Let $J$ be a finitely generated ideal of $A=k[T_1,\ldots,T_n]_{\ds{Q}}$ and $f\in I(V(J))$ an arbitrary element, it needs to be shown that $f^N\in J$ for some $N\in\ds{Z}_{>0}$.

The trick is to introduce a new variable $t$ and define
\begin{equation}
  J':=(J,ft-1)\triangleleft A[t],
\end{equation}
which implies that
\begin{equation}
  V(J')=\{(a_1,\ldots,a_n,b)\in k^{n+1}\text{ such that }(a_1,\ldots, a_n)\in V(J) \text{ and }bf(a_1,\ldots, a_n)=1\}
\end{equation}
Let $\pi$ be the projection of $V(J')$ to the first $n$ coordinates giving $\pi(V(J'))\subset V(J)$ such that $f(c_1,\ldots,c_n)\neq 0$ for $(c_1,\ldots,c_n)\in \pi(V(J'))$. Since, $f\in I(V(J))$ hence $V(J')=\varnothing$. Hence, $J'=A[t]$ by theorem \ref{nulstell}. Since $1\in A[t]=J'$, this implies
\begin{equation}
  1=\sum_{i=1}^r g_if_i+g_0\cdot(ft-1)\in A[t],
\end{equation}
where $\lr{f_1,\ldots, f_r}$ generate $J$ and $g_0,g_1,\ldots, g_r\in A[t]$.
The variable $t$ might appear in all the polynomials $g_0,\ldots, g_r$, let $N\in\ds{Z}_{>0}$ be the highest degree of $t$ in all the $g_i$. Multiplying the above equation with $f^N$ allows to set $(ft)^d=1$ for $d\leq N$ in the polynomials $g_i$ and giving us new polynomials $G_i$ in $A[f]=A$.
\begin{equation}
  \begin{aligned}
        f^N&=\sum_{i=1}^r f^Ng_if_i+f^Ng_0\cdot(ft-1),\\
        &=\sum_{i=1}^r f^Ng_if_i\mod (ft-1)\quad \text{set $ft=1$ to get}\\
          &\equiv \sum_{i=1}^r G_if_i\quad\text{ where } G_i\in A[f]=A
  \end{aligned}
\end{equation}
Since $f_i$ generate $J$ this gives us that $f^N\in J$ in the ring $A[t]/(ft-1)$.

Note that there is an injection $A\hookrightarrow A[t]/(ft-1)$ since $A$ is an integral domain and therefore injects into localization. Thus, the relation $f^N\in J$ also holds in $A$.
\end{proof}

\section{Noether Normalization}\label{normalization}
This section adapts the arguments from \cite[{Tag 00OW}]{stacks-project} to the case at hand.

\begin{define}

Let $I\subset k[T_1,\ldots,T_n]_{\ds{Q}}$ be an ideal, then $k[T_1,\ldots,T_n]_{\ds{Q}}/I$ will be called rationally generated over $k[T_1,\ldots,T_n]_{\ds{Q}}$.
\end{define}

\begin{remark}
  If the degrees are integer then there is finite generation, but this is no longer the case here. For example $k[X]_{\ds{Q}}/\lr{X}$ is infinitely generated by $\{1,X^r\}, r\in\ds{Q}\cap(0,1)$. This also shows that the monomials $(X^i)_{i\in{\ds{Q}_{>0}}}$ are linearly dependent for $r\in\ds{Q}\cap[1,\infty)$ in the ring $k[X]_{\ds{Q}}/\lr{X}$.
\end{remark}

\begin{lemma}\label{normalization-1}
  Let $g\in k[X_1,\ldots, X_n]$ be a non constant polynomial and $e_i\in\ds{Z}_{>0}$ such that $e_1\gg e_2\gg \ldots\gg e_{n-1}\gg e_n=1$, then the following holds with $d>0$ and $a\in k\bs\{0\}$
  \begin{equation}\label{eq-normalization-1}
    g(X_1+X_n^{e_1}, X_2+X_n^{e_2},\ldots, X_{n-1}+X_n^{e_{n-1}}, X_n)=aX_n^d+\text{ lower order terms in }X_n.
  \end{equation}
\end{lemma}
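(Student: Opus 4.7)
The plan is to expand $g$ monomial by monomial, track the $X_n$-degree contributed by each monomial after substitution, and exploit the rapid growth $e_1 \gg e_2 \gg \cdots \gg e_{n-1}$ to guarantee that exactly one monomial produces the top $X_n$-degree, so its coefficient survives without cancellation.

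Write $g = \sum_I c_I X_1^{i_1}\cdots X_n^{i_n}$ as a finite sum over multi-indices $I=(i_1,\ldots,i_n)$ with $c_I \in k\bs\{0\}$. After the substitution $X_j \mapsto X_j+X_n^{e_j}$ for $1\leq j\leq n-1$, the monomial $X^I$ becomes
\[
\prod_{j=1}^{n-1}(X_j+X_n^{e_j})^{i_j}\cdot X_n^{i_n} \;=\; X_n^{i_1 e_1+\cdots+i_{n-1}e_{n-1}+i_n} \;+\; (\text{lower order in }X_n),
\]
where the lower order terms arise from binomial expansions that involve at least one factor of $X_j$ with $j<n$ and therefore contribute a strictly smaller power of $X_n$. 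Define $\phi(I):=i_1e_1+\cdots+i_{n-1}e_{n-1}+i_n$ for the top $X_n$-degree produced by $X^I$.

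The crux is to argue that $\phi$ is injective on the (finite) support $\Lambda:=\{I:c_I\neq 0\}$. Let $D$ be strictly larger than every coordinate of every $I\in\Lambda$. Choose the $e_j$ so that $e_{j}>D\cdot e_{j+1}$ for $1\leq j\leq n-1$ (with $e_n=1$); this is exactly what the shorthand $e_1\gg e_2\gg\cdots\gg e_{n-1}\gg e_n=1$ is meant to encode. Then $\phi$ becomes a mixed-radix digit expansion, and distinct tuples in $\Lambda$ map to distinct integers. Let $I^\star\in\Lambda$ be the unique maximizer of $\phi$, put $d:=\phi(I^\star)$ and $a:=c_{I^\star}\in k\bs\{0\}$. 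Because $I^\star$ is the unique element of $\Lambda$ achieving $X_n$-degree $d$, there is no cancellation, and the coefficient of $X_n^d$ in the substituted polynomial equals $a$, while all other monomials of $g$ contribute only to strictly lower powers of $X_n$.

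The only real obstacle is the injectivity of $\phi$, and this is handled entirely by the explicit choice of $e_j$ in terms of the maximum multidegree $D$ appearing in $g$; hence the hypothesis $e_1\gg e_2\gg\cdots$ is not merely aesthetic but quantitative, and the remainder of the argument is a direct expansion.
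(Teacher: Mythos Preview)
Your argument is correct and is exactly the standard proof of this classical step in Noether normalization. The paper does not supply its own proof of this lemma: the section opens by citing \cite[Tag 00OW]{stacks-project} and then states the lemma without proof, treating it as the known integer-degree input that is subsequently adapted in Lemma~\ref{normalization-1a} to rational degrees. Your expansion monomial by monomial, the weight function $\phi(I)=\sum_j i_j e_j$, and the injectivity via a mixed-radix (base-$D$) reading of the condition $e_j>D\,e_{j+1}$ constitute precisely the argument found in the Stacks Project reference, so there is nothing to compare: you have reproduced the cited proof.
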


\begin{lemma}\label{normalization-1a}
  Let $f\in k[T_1,\ldots, T_n]_{\ds{Q}}$ be a non constant polynomial and $a_i\in\ds{Q}_{>0}$ such that $a_1\gg a_2\gg \ldots\gg a_{n-1}\gg a_n=1$, then the following holds with $d'>0$ and $a\in k\bs\{0\}$.
  \begin{equation}
    f(T_1^{1/j_1}+T_n^{a_1}, T_2^{1/j_2}+T_n^{a_2},\ldots, T_{n-1}^{1/j_{n-1}}+T_n^{a_{n-1}}, T_n)=aT_n^{d'}+\text{ lower order terms in }T_n.
  \end{equation}
  The $j_i\in\ds{Z}_{>0}$ for $1\leq i \leq n$.
\end{lemma}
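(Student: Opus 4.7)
The strategy is to prove Lemma \ref{normalization-1a} by a weighting argument on the finitely many monomials of $f$, directly mimicking the proof of Lemma \ref{normalization-1}. The key observation is that although $f$ has rational exponents, only finitely many of them appear, so one may clear denominators for $T_1,\ldots,T_{n-1}$ and treat $T_i^{1/N}$ as an atom with integer exponents in $f$ for a suitable $N$.

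Concretely, pick $N \in \ds{Z}_{>0}$ such that every exponent of $T_1, \ldots, T_{n-1}$ occurring in a monomial of $f$ is a multiple of $1/N$, and set $j_i := N$. Each monomial of $f$ then takes the form $c_\alpha\, T_1^{m_1/N}\cdots T_{n-1}^{m_{n-1}/N}\, T_n^{\beta_\alpha}$ with $m_i \in \ds{Z}_{\ge 0}$ and $\beta_\alpha \in \ds{Q}_{\ge 0}$. Performing the substitution $T_i^{1/N} \mapsto T_i^{1/N} + T_n^{a_i}$ for $i < n$ while leaving $T_n$ fixed, the binomial expansion
\[
\prod_{i=1}^{n-1}\bigl(T_i^{1/N}+T_n^{a_i}\bigr)^{m_i} \;=\; T_n^{\sum_i m_i a_i} \;+\; (\text{terms containing some } T_i^{k_i/N} \text{ with } k_i\ge 1)
\]
shows that each original monomial produces a unique highest-$T_n$-degree contribution $c_\alpha\, T_n^{\sum_i m_i a_i + \beta_\alpha}$, with all remaining summands in its expansion carrying strictly smaller $T_n$-degree.

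The main step is then to choose the rationals $a_1 \gg a_2 \gg \cdots \gg a_{n-1} \gg a_n = 1$ so that the weights $w(\alpha):=\sum_i m_i a_i + \beta_\alpha$ are pairwise distinct across the finitely many monomials of $f$; whichever $\alpha^*$ uniquely maximizes $w$ will then give the desired leading term. Proceed recursively: pick $a_{n-1} \in \ds{Q}_{>0}$ strictly larger than the diameter of $\{\beta_\alpha\}_\alpha$ and avoiding the finitely many rational values that would collapse two weights when only $m_{n-1}$ and $\beta$ differ; then pick $a_{n-2}$ rational and much larger than $a_{n-1}\cdot\max_\alpha m_{n-1}(\alpha)$, again avoiding the finitely many coincidence loci; continue downward to $a_1$. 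The unique maximizer $\alpha^*$ then supplies the leading term $a\, T_n^{d'}$ with $a := c_{\alpha^*} \in k \setminus \{0\}$ and $d' := w(\alpha^*) \in \ds{Q}_{>0}$.

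The main obstacle is purely combinatorial: one must juggle the recursive dominance ordering $a_i \gg a_{i+1}$ against finitely many non-coincidence conditions simultaneously. This is essentially identical to the integer-exponent analysis of Lemma \ref{normalization-1}, and the density of $\ds{Q}$ in $\ds{R}$ supplies ample flexibility at each recursive step to meet both kinds of constraints.
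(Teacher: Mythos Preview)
Your argument is correct, but it takes a slightly different route from the paper. The paper's proof is a two-line reduction: it clears denominators on \emph{all} $n$ variables simultaneously via the substitution $T_i^{1/j_i}\mapsto X_i$ of Lemma~\ref{lem-change-var}, obtaining an honest integer-degree polynomial $g\in k[X_1,\ldots,X_n]$, and then applies Lemma~\ref{normalization-1} to $g$ as a black box; substituting back $X_i=T_i^{1/j_i}$ gives $a_i=e_i/j_n$ and $d'=d/j_n$. You instead clear denominators only for $T_1,\ldots,T_{n-1}$, leave the $T_n$-exponents rational, and then reprove the weighting argument of Lemma~\ref{normalization-1} from scratch, handling the recursive choice of the $a_i$ explicitly. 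Both approaches rest on the same idea (finitely many monomials, so a single common denominator plus a dominance weighting separates the top term), but the paper's version is shorter because it reuses Lemma~\ref{normalization-1} verbatim, while yours is self-contained and makes the choice of the $a_i$ more explicit.
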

\begin{proof}
  Convert the rational degree polynomial $f$ into an integer degree polynomial $g\in k[X_1,\ldots, X_n]$ as done in the proof of lemma \ref{lem-change-var}, by substitution $T_i^{1/j_i}\mapsto X_i$, then \eqref{eq-normalization-1} gives
  \begin{equation}
    \begin{aligned}
        g(X_1+X_n^{e_1}, X_2+X_n^{e_2},\ldots, X_{n-1}+X_n^{e_{n-1}}, X_n)&=aX_n^d+\text{ lower order terms in }X_n.\\
        f(T_1^{1/j_1}+T_n^{a_1}, T_2^{1/j_2}+T_n^{a_2},\ldots, T_{n-1}^{1/j_{n-1}}+T_n^{a_{n-1}}, T_n^{1/j_n})&=aT_n^{d'}+\text{ lower order terms in }T_n.
    \end{aligned}
  \end{equation}
Since $T_n^{1/j_n}\mapsto X_n$, this gives $a_i=e_i/j_n$ in the equation above.
\end{proof}

\begin{proposition}\label{prop-normalization-1}
  Let $k$ be a field and $A=k[T_1,\ldots,T_n]_{\ds{Q}}$. Let $S'=A/I$ where $I$ is a proper ideal of $A$. Then, there exist $Y_1^{1/j_1},\ldots, Y_{n-1}^{1/j_{n-1}}\in A$ such that $S'$ is rationally generated over $k[Y_1,\ldots, Y_{n-1}]_{\ds{Q}}$.
\end{proposition}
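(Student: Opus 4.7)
The plan is to follow the classical Noether normalization argument from \cite[Tag 00OW]{stacks-project}, substituting Lemma \ref{normalization-1a} for the usual monicity-inducing change of coordinates. The goal is to pick a nonzero $f\in I$, use it to reduce to a polynomial that is monic in $T_n$, and read off a subring $k[Y_1,\ldots,Y_{n-1}]_{\ds{Q}}$ over which $S'$ becomes a quotient in the rational-degree sense.

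Assume $I\neq 0$ (if $I=0$ the statement is vacuous after a trivial choice of the $Y_i$) and pick a nonzero $f\in I$. Because $f$ involves only finitely many rational exponents, it lies in the noetherian subring $B_N=k[T_1,T_1^{1/2},\ldots,T_n^{1/N}]$ for some $N\in\ds{Z}_{>0}$. Lemma \ref{normalization-1a} then supplies integers $j_1,\ldots,j_{n-1}\in\ds{Z}_{>0}$ and positive rationals $a_1\gg\cdots\gg a_{n-1}\gg 1$ so that
\[
f\!\left(T_1^{1/j_1}+T_n^{a_1},\ldots,T_{n-1}^{1/j_{n-1}}+T_n^{a_{n-1}},T_n\right)=a\,T_n^{d'}+(\text{lower order in }T_n),
\]
with $a\in k\setminus\{0\}$ and $d'\in\ds{Q}_{>0}$, the lower-order coefficients lying in the subring generated by $T_1^{1/j_1},\ldots,T_{n-1}^{1/j_{n-1}}$.

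Next, set $Y_i^{1/j_i}:=T_i^{1/j_i}-T_n^{a_i}\in A$ for $1\le i\le n-1$, so that $T_i^{1/j_i}=Y_i^{1/j_i}+T_n^{a_i}$ and $Y_i=(T_i^{1/j_i}-T_n^{a_i})^{j_i}\in A$. Rewritten in these new generators, the displayed identity expresses $f$ as a polynomial that is monic in $T_n$ of rational degree $d'$ with coefficients in $k[Y_1,\ldots,Y_{n-1}]_{\ds{Q}}$. Passing to the quotient $S'=A/I$, the image of $T_n$ satisfies this monic rational-degree equation over $k[Y_1,\ldots,Y_{n-1}]_{\ds{Q}}$, which is precisely the sense in which $S'$ is rationally generated over $k[Y_1,\ldots,Y_{n-1}]_{\ds{Q}}$: there is a surjection from $k[Y_1,\ldots,Y_{n-1}]_{\ds{Q}}[T_n]_{\ds{Q}}$ onto $S'$ whose kernel contains the monic relation, shrinking the number of independent rational-degree coordinates by one.

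The main technical obstacle is bookkeeping across the direct-limit structure of $A$. A generic element of $A$ may involve rational exponents of the $T_i$ whose denominators are not divisible by the particular $j_i$ produced by Lemma \ref{normalization-1a}, so one has to verify that the substitution $T_i^{1/j_i}\leftrightarrow Y_i^{1/j_i}+T_n^{a_i}$ extends compatibly to $B_M$ for every $M$ that is a common multiple of $N,\,j_1,\ldots,j_{n-1}$ and the denominators of the $a_i$. Working one $B_M$ at a time this is an ordinary polynomial change of coordinates, and consistency across the transition maps $B_M\hookrightarrow B_{M'}$ reduces to the finite-exponent observation that any element of $A$ already lives in some $B_M$. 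Once that routine compatibility is recorded, the monicity of Lemma \ref{normalization-1a} delivers the conclusion immediately.
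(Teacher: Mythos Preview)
Your proposal is correct and follows essentially the same route as the paper: reduce to a single nonzero $f\in I$, apply Lemma~\ref{normalization-1a} to force a monic-in-$T_n$ expression, set $Y_i^{1/j_i}=T_i^{1/j_i}-T_n^{a_i}$, and read off the rational-degree relation for $\bar T_n$ over $k[Y_1,\ldots,Y_{n-1}]_{\ds{Q}}$. Your extra paragraph on direct-limit bookkeeping is more explicit than the paper's argument, which simply invokes the lemma and stops.
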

\begin{proof}
It suffices to prove for $S=A/f$ where $f$ is non zero element of $A$, since $S'$ is quotient ring of $S$. Set $Y_i^{1/j^i}=T_i^{1/j^i}-T_n^{a_i}$ for $i=1,\ldots,n$ for suitable rationals $a_i$. It suffices to show that $\bar{T}_n^{1/j_n}\in S$ satisfies a polynomial with rational degree with coefficients in $k[Y_1,\ldots,Y_{n-1}]_{\ds{Q}}$.The expression $f(T_1^{1/j_1},\ldots, T_n^{1/j_n})$, transforms to
  \begin{equation}
    f(Y_1^{1/j_1}+T_n^{a_1},\ldots+Y_{n-1}^{1/j_{n-1}}+T_n^{a_{n-1}},T_n^{1/j_n})=0
  \end{equation}
  for $\bar{T}_n^{1/j_n}\in S$ over $k[Y_1,\ldots,Y_{n-1}]_{\ds{Q}}$. The above equation gives via lemma \ref{normalization-1a}.
  \begin{equation}
    aT_n^{d'}+\text{ lower order terms in }T_n=0
  \end{equation}
  implying generation over $k[Y_1,\ldots, Y_{n-1}]_{\ds{Q}}$.
\end{proof}

\begin{lemma}
  Let $k$ be a field and $A=k[T_1,\ldots,T_n]_{\ds{Q}}$. Let $S'=A/I$ where $I$ is a proper ideal of $A$. Then there exists $r\geq 0$ and $Y_1,\ldots, Y_r\in A$ such that $S'$ is rationally generated over $k[Y_1,\ldots,Y_r]_{\ds{Q}}$.
\end{lemma}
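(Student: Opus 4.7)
The statement is the iterated form of proposition~\ref{prop-normalization-1}, so I would prove it by induction on $n$. The base case $n = 0$ is trivial: $A = k$, the only proper ideal is $(0)$, and $S' = k$ is rationally generated over $k$ with $r = 0$ (no $Y_i$ needed).

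For the inductive step, assume the lemma holds for all $n' < n$. Given $S' = A/I$ with $A = k[T_1,\ldots,T_n]_{\ds{Q}}$ and $I \subsetneq A$, I would apply proposition~\ref{prop-normalization-1} directly to obtain elements $Y_1,\ldots,Y_{n-1} \in A$ such that $S'$ is rationally generated over $B := k[Y_1,\ldots,Y_{n-1}]_{\ds{Q}}$. By the definition just above the proposition, this presentation means we have $S' \simeq B/J$ for some ideal $J \subseteq B$. Since $I$ is proper, $S' \neq 0$, so $J$ is a proper ideal of $B$. The inductive hypothesis now applies to $S' = B/J$ in $n-1$ rational degree variables, producing $r \geq 0$ and elements $Z_1,\ldots,Z_r \in B \subseteq A$ such that $S'$ is rationally generated over $k[Z_1,\ldots,Z_r]_{\ds{Q}}$, which is exactly the desired conclusion. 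Since each iteration strictly decreases the number of variables, the process terminates after at most $n$ steps.

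The main thing requiring care is the compatibility of the presentation at each stage: proposition~\ref{prop-normalization-1} delivers a surjection $B \twoheadrightarrow S'$ (obtained by sending $Y_i$ to its class modulo $I$), so to invoke the inductive hypothesis one must identify $S'$ with the quotient $B/J$ where $J = \ker(B \to A/I)$, and verify $J$ is proper. Properness is automatic from $I \subsetneq A$, but one should note that the $Z_i$ produced in the final step are elements of $B$, hence of $A$, so the statement of the lemma as written (with $Y_i \in A$) is respected throughout the iteration. Aside from this bookkeeping, no new computation is needed beyond what proposition~\ref{prop-normalization-1} already supplies.
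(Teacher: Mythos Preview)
Your approach is the same as the paper's---induction on $n$, invoking Proposition~\ref{prop-normalization-1} at each step to pass from $n$ variables to $n-1$---and the bookkeeping you describe (identifying $S'$ with $B/J$ via the kernel of $B\to S'$, checking $J$ is proper, noting the $Z_i$ land in $A$) is exactly what the paper does in its one-line ``let $S\subset S'$ be the subring generated by image of rational powers of $Y_i$; by induction\ldots''.

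There is one omission worth flagging. The paper separates out the case $I=0$ explicitly (taking $r=n$ and $Y_i=T_i$), whereas you apply Proposition~\ref{prop-normalization-1} uniformly to every proper $I$. Although the proposition is \emph{stated} for any proper ideal, its proof begins by choosing a nonzero $f\in I$ and passing to $A/(f)$; when $I=0$ there is no such $f$, and indeed the conclusion would be false (if $I=0$ then $S'=A$ has $n$ algebraically independent generators and cannot be a quotient of a rational polynomial ring in $n-1$ variables). So your inductive step needs the extra clause: if $I=0$, stop and output $r=n$, $Y_i=T_i$; otherwise invoke the proposition as you do. With that one-line fix your argument matches the paper's.
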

\begin{proof}
  Proof by induction. For $n=0$ choose $r=0$ as well. For $n>0$ if $T_i$ are algebraically independent (that is $I=0$) then take $T_i=Y_i$. If $I\neq 0$ then choose $Y_1,\ldots, Y_{n-1}$ by Proposition \ref{prop-normalization-1}. Let $S\subset S'$ be the subring generated by image of rational powers of $Y_i$. By induction $r$ can be chosen such that the result holds, that is keep modding out by ideal $I$ till algebraically independent basis in terms of rational powers of $Y_i$ is obtained.
\end{proof}

\section{Rational functions and Morphisms}
\begin{define}
  Let $V\subset k^n$ and $W\subset k^m$ be two affine algebraic sets and let $\phi=(\phi_1,\ldots,\phi_m)$ with $\phi_i:V\ra k$ such that there is a map $\phi:V\ra W$.  Then $\phi$ is called regular if every $\phi_i\in k[V]_\ds{Q}$.
\end{define}

\textdbend The composition of two polynomials with rational degree might not be polynomial with a rational degree but a series associated with Newton's fractional Binomial expansion. For example let $f_1=1+X^{1/2}+X^2$ and $f_2=1+X$, then
\begin{equation}\label{prob-mor-1}
f_1\circ f_2= 1+(1+X)^{1/2}+(1+X)^2\notin k[X]_\ds{Q}
\end{equation}
On the other hand an evaluation at the point $x\in k$ leads to the equality
\begin{equation}
  (f_1\circ f_2 )(x)=f_1(f_2(x))\in k \text{ for all }x\in k \text{ algebraically closed}.
\end{equation}

Thus, it is not possible to obtain an antiequivalence of categories from morphism of varieties $V\ra W$ to a ring homomorphism $k[W]\ra k[V]$. This also shows that morphism of schemes is the correct approach to the problem.

\begin{define}
\begin{enumerate}
  \item Let $k(V)_\ds{Q}$ denote the field of fractions of $k[V]_\ds{Q}$, the elements $f\in k(V)_\ds{Q}$ are called rational functions on $V$.
  \item The rational function $f$ is regular at a point $p\in V$ if there is a representation of the form $f=g/h$ with $h(p)\neq 0$. The domain of definition of $f$ is given
  as
  \begin{equation}
    \mathrm{dom}(f):=\{p\in V\text{ such that }f\text{ is regular at }p\}.
  \end{equation}
  \item The local ring of $V$ at $p$ is given as
  \begin{equation}
    \mathscr{O}_{V,p}=\{f\in k(V)_\ds{Q}\text{ such that }f \text{ is regular at }p\}=k[V]_{\ds{Q}}[1/h]\text{ such that }h(p)\neq 0.
      \end{equation}
      The maximal ideal is $\id{m}_{V,p}=\{f\in  \mathscr{O}_{V,p}\text{ such that }f(p)=0\}$.
\end{enumerate}

\end{define}

\subsection{Finite Fields}\label{finite-field} In this section let $k$ be an algebraically closed field of $\chr p$, then $(X^{1/p}+Y^{1/p})^p=X+Y$ or $X^{1/p}+Y^{1/p}=(X+Y)^{1/p}$ in the ring $k[X,Y]_{\ds{Q}}$. This immediately leads us to consider the ring obtained from $k[X]$ by attaching $X^{1/p^i},i\in\ds{Z}_{>0}$.
\subsubsection{$\deg \ds{Z}[1/p]$ via Direct Limit}

\begin{construction}\label{constr-dir-lim-2-var}
  Consider the following inclusions
  \begin{equation}\label{dir-lim-2-var}
    k[X]\subseteq k[X,X^{1/p}]\subseteq k[X,X^{1/p},X^{1/p^2},X^{1/p^3}]\subseteq\ldots\subseteq\varinjlim_i k[X,\ldots, X^{1/p^i}]=\bigcup_i k[X,\ldots, X^{1/p^i}]
  \end{equation}
  The above construction can be carried onto multiple variables
  \begin{equation}
    \begin{aligned}
      k[X_1,\ldots, X_n]\subseteq k[X_1,X_1^{1/p},\ldots, X_n, X_n^{1/p}]\subseteq k[X_1,X_1^{1/p},X_1^{1/p^2},\ldots,X_n,X_n^{1/p},X_n^{1/p^2}]\subseteq\ldots\\
      \ldots\subseteq\varinjlim_i k[X_1,\ldots, X_1^{1/p^i},\ldots, X_n\ldots, X_n^{1/p^i}]=\bigcup_i k[X_1,\ldots, X_1^{1/p^i},\ldots,X_n,\ldots, X_n^{1/p^i} ]
    \end{aligned}
  \end{equation}
\end{construction}

\begin{define} The following notation will be used throughout
  \begin{equation}
    \begin{aligned}
      k[X]_{\ds{Z}[1/p]}&=\bigcup_i k[X,\ldots, X^{1/p^i}]\\
      k[X_1,\ldots, X_n]_{\ds{Z}[1/p]}&=\bigcup_i k[X_1,\ldots, X_1^{1/p^i},\ldots,X_n,\ldots, X_n^{1/p^i} ]
    \end{aligned}
  \end{equation}
\end{define}

Since, $(X_1+\ldots+X_n)^{1/p}=X_1^{1/p}+\ldots+X_n^{1/p}$ the composition of two polynomials with a rational degree is a polynomial and the \eqref{prob-mor-1} can now be expressed as
\begin{equation}
  f_1\circ f_2= 1+(1+X)^{1/2}+(1+X)^2= 1+1+X^{1/2}+(1+X)^2\in k[X]_{\ds{Z}[1/2]}\text{ where }\chr k=2.
\end{equation}
In what follows,
\cite{reid1988undergraduate} and \cite{hulek2003elementary} are adapted to the case at hand.
\begin{define}
Let $V\subset \ds{A}^n_k$ and $W\subset\ds{A}^n_k$ be algebraic sets, a map $\phi:V\ra W$ is a polynomial map if there are $\phi_1,\ldots,\phi_m\in k[X_1,\ldots, X_n]_{\ds{Z}[1/p]}$ such that
\begin{equation}
\phi(p)=(\phi_1(p),\ldots,\phi_m(p))\text{ for all }p\in V
\end{equation}
\end{define}

Since composition of two polynomials with degree $\ds{Z}[1/p]$ over a field of $\chr p$ is again an element of $k[X_1,\ldots, X_n]_{\ds{Z}[1/p]}$, the maps between algebraic sets can be composed. If $V\subset\ds{A}^{n}, W\subset\ds{A}^{m}$ and $U\subset \ds{A}^{\ell}$ are algebraic sets, and $\phi:V\ra W,\varphi:W\ra U$ are polynomial maps then
$\varphi\circ \phi:V\ra U$ is again a polynomial map. If $\phi$ is given by $(\phi_1,\ldots,\phi_{m} )$ and $\varphi$ by $(\varphi_1,\ldots,\varphi_{\ell})$, then $\varphi\circ\phi$ is given by

\begin{equation}
  \varphi_1(\phi_1,\ldots,\phi_{m} ),\ldots,\varphi_{\ell}(\phi_1,\ldots,\phi_{m} )\in k[X_1,\ldots, X_n]_{\ds{Z}[1/p]}
\end{equation}
The following is an analogue of \cite[pp 73]{reid1988undergraduate}
\begin{theorem}\label{finite-field-thm1}
  Let $V\subset \ds{A}^n,W\subset A^m$ and $U\subset A^\ell$ be algebraic sets as above.
  \begin{enumerate}
     \item A polynomial map $\phi: V \ra W $ induces a $k$-algebra homomorphism $\phi^* : k[W]_{\ds{Z}[1/p]} \ra k[V]_{\ds{Z}[1/p]}$, defined by
composition of functions; that is, if $g \in k[W]_{\ds{Z}[1/p]}$ is a polynomial function then so is $\phi^* (g) = g \circ \phi$.
\item  If $\phi:V\ra W$ and $\varphi:W\ra U$ are polynomial maps then $(\varphi\circ\phi)^*=\phi^*\circ\varphi^*$.
\item If $\Phi:k[W]\ra k[V]$  then it is of the form $\Phi=\phi^*$ where $\phi:V\ra W$ is a unique polynomial map.
  \end{enumerate}
\end{theorem}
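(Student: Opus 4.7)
The plan is to mirror the classical proof of the antiequivalence between affine varieties and finitely generated reduced $k$-algebras, with the crucial adjustment that at every step one must verify that the object produced is again an element of the ring $k[\cdot]_{\ds{Z}[1/p]}$. The warning at \eqref{prob-mor-1} shows this fails in the rational-degree setting, but here the Frobenius identity $(a_1+\cdots+a_n)^{1/p^k}=a_1^{1/p^k}+\cdots+a_n^{1/p^k}$, valid in any $k$-algebra with $\chr k=p$, is exactly what rescues the composition. Throughout I would reduce to one of the sub-rings $k[X_1,\ldots,X_1^{1/p^i},\ldots,X_n,\ldots,X_n^{1/p^i}]$ in which all polynomials involved simultaneously live, mimic the classical argument from \cite[pp.~73]{reid1988undergraduate} there, and then take the direct limit.

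For part (1), given $\phi=(\phi_1,\ldots,\phi_m)$ with each $\phi_i\in k[V]_{\ds{Z}[1/p]}$ and a monomial $Y_1^{a_1/p^{k_1}}\cdots Y_m^{a_m/p^{k_m}}$ appearing in a representative of $g$, I would substitute to obtain $\phi_1^{a_1/p^{k_1}}\cdots\phi_m^{a_m/p^{k_m}}$ and then apply Frobenius iteratively to each factor to rewrite it as a $\ds{Z}[1/p]$-degree polynomial in the coordinates of $V$. Summing over monomials shows $g\circ\phi$ lives in $k[X_1,\ldots,X_n]_{\ds{Z}[1/p]}$; descent to the quotient $k[V]_{\ds{Z}[1/p]}$ is automatic, since for $g\in I(W)$ we have $(g\circ\phi)(q)=g(\phi(q))=0$ for every $q\in V$ because $\phi(V)\subseteq W$. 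Part (2) is then purely formal from associativity of function composition: $(\varphi\circ\phi)^*(g)=g\circ(\varphi\circ\phi)=(g\circ\varphi)\circ\phi=\phi^*(\varphi^*(g))$.

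Part (3) is where the real content lies and is the step I expect to be the main obstacle. Given $\Phi:k[W]_{\ds{Z}[1/p]}\to k[V]_{\ds{Z}[1/p]}$, the natural candidate is $\phi_i:=\Phi(\bar Y_i)$ for $\bar Y_i$ the images of the coordinate functions on $W$, giving a map $\phi=(\phi_1,\ldots,\phi_m):V\to k^m$. The delicate task is to verify $\phi(V)\subseteq W$: for every $g\in I(W)$ one must show $g(\phi_1(q),\ldots,\phi_m(q))=0$ for all $q\in V$. The crux is that $\Phi$, being a $k$-algebra map between rings in which the Frobenius is injective, is automatically compatible with the operation $x\mapsto x^{1/p^k}$: indeed, $(\Phi(x^{1/p^k}))^{p^k}=\Phi(x)$, and uniqueness of $p$-th roots forces $\Phi(x^{1/p^k})=\Phi(x)^{1/p^k}$. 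This lets me compute $\Phi(g)=g(\phi_1,\ldots,\phi_m)$ inside $k[V]_{\ds{Z}[1/p]}$, and evaluating at any $q\in V$ yields zero because $g$ is already zero in $k[W]_{\ds{Z}[1/p]}$. Uniqueness of $\phi$ is forced by $\phi_i=\phi^*(\bar Y_i)$, and the identity $\phi^*=\Phi$ then follows from the same computation applied to a general $g$. Without unique $p$-th roots this last step collapses, which is precisely why the statement requires $\chr k=p$.
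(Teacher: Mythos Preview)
Your proposal is correct and follows essentially the same route as the paper: define $\phi^*(g)=g\circ\phi$, verify it is a $k$-algebra homomorphism, prove contravariance by associativity of composition, and for part (3) set $\phi_i=\Phi(\bar Y_i)$, check $\phi(V)\subseteq W$ by applying $\Phi$ to an arbitrary $G\in I(W)$, and deduce $\Phi=\phi^*$ by checking on generators.

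The one noteworthy difference is that you make explicit a point the paper glosses over. In part (3) the paper simply writes ``Since $\Phi$ is a $k$-algebra homomorphism, $0=\Phi(G(y_1,\ldots,y_m))=G(\Phi(y_1),\ldots,\Phi(y_m))$'', and later ``it is enough to check on the generators $\ldots$ $\Phi(y_i)=\phi^*(y_i)$''. But a $k$-algebra homomorphism a priori commutes only with integer powers, not with the operation $x\mapsto x^{1/p^k}$, and the algebra $k[W]_{\ds{Z}[1/p]}$ is \emph{not} generated by the $y_i$ alone over $k$. Your observation that $(\Phi(x^{1/p^k}))^{p^k}=\Phi(x)$ together with injectivity of Frobenius on the reduced ring $k[V]_{\ds{Z}[1/p]}$ forces $\Phi(x^{1/p^k})=\Phi(x)^{1/p^k}$ is exactly what justifies those two steps; this is a genuine clarification rather than a different argument. (A small caveat: your opening plan to ``reduce to one of the sub-rings $k[X_1,\ldots,X_n^{1/p^i}]$ and take the direct limit'' does not literally apply to part (3), since $\Phi$ is given only on the full limit; but your actual argument for (3) does not use that reduction, so this is harmless.)
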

\begin{proof}
  \begin{enumerate}
    \item For $g \in k[W]_{\ds{Z}[1/p]}$ define $\phi^*(g)=g\circ \phi=g(\phi_1,\ldots, \phi_m)\in k[V]_{\ds{Z}[1/p]}$. The map $\phi^*$ is a ring homomorphism since
    \begin{equation}
      \begin{aligned}
        \phi^*(g_1+g_2)&=(g_1+g_2)\circ \phi=g_1\circ\phi+g_2\circ\phi=\phi^*(g_1)+\phi^*(g_2)\\
        \phi^*(g_1\cdot g_2)&=(g_1\cdot g_2)\circ \phi=(g_1\circ\phi)\cdot (g_2\circ\phi)=\phi^*(g_1)\cdot\phi^*(g_2)
      \end{aligned}
        \end{equation}
    \item Let $h\in k[U]_{\ds{Z}[1/p]}$
    \begin{equation}
      (\varphi\circ\phi)^*(h)=h\circ \varphi \circ \phi=\varphi^*(h)\circ \phi=\phi^*\circ\varphi^*(h).
    \end{equation}
\item Since, $W\subset\ds{A}_k^m$ this gives
\begin{equation}
  k[W]=\frac{k[Y_1,\ldots, Y_m]_{\ds{Z}[1/p]}}{I(W)}=k[y_1,\ldots, y_m]_{\ds{Z}[1/p]}\text{ where }y_i^{1/p^j}=Y_i^{1/p^j}+I(W)
\end{equation}
for $j\in\ds{Z}[1/p]_{\geq 0}$. Since, $\Phi:k[W]_{\ds{Z}[1/p]}\ra k[V]_{\ds{Z}[1/p]}$ is given define $\phi_i\in k[V]_{\ds{Z}[1/p]}$ as $\phi_i=\Phi(y_i)$ This gives a polynomial map $\phi:=(\phi_1,\ldots,\phi_m): V\ra \ds{A}^m$. First, it is shown that $f(V)\subset W$. Let $G\in I(W)\subset k[Y_1,\ldots, Y_m]_{\ds{Z}[1/p]}$; then
\begin{equation}
G(y_1,\ldots, y_n)=0\in k[W],
\end{equation}
which is simply substitution of $Y_i$ with $y_i$. This gives $\Phi(G(y_1,\ldots,y_m))=0\in k[V]$. Since $\Phi$ is a $k$ algebra homomorphism
\begin{equation}
  0=\Phi(G(y_1,\ldots, y_m))=G(\Phi(y_1),\ldots,\Phi(y_m))=G(\phi_1,\ldots, \phi_m)\Rightarrow f(V)\subset W.
\end{equation} It needs to be checked that $\phi^*=\Phi$ and it is enough to check on the generators of $k[W]_{\ds{Z}[1/p]}$, that is $\Phi(y_i)=\phi^*(y_i)=\phi_i$. But, this holds since it is the definition of $\phi_i$. This also shows that $\phi=(\phi_1,\ldots,\phi_m)$ is the unique polynomial with $\Phi=\phi^*$.
  \end{enumerate}
\end{proof}

\section{Projective Geometry}\label{proj-geo}
A definition analogous to that of affine varieties is needed, but the problem is that the homogeneous coordinates $(x_0:\cdots:x_n)$ not being uniquely defined, even the value of polynomial at the point is not unique. In fact, only the zeros of the polynomials are of interest us. These will be defined for a certain
category of polynomials.

\begin{define}
  A rational degree polynomial $f\in k[T_1,\ldots, T_n]_{\ds{Q}}$ is homogeneous of degree $d\in\ds{Q}_{\geq 0}$ if
  \begin{equation}
    f=\sum_{i_0\ldots i_n} {a_{i_0\ldots i_n}} T_0^{i_0}\cdots T_n^{i_n}\text{ with }a_{i_0\ldots i_n}\neq 0 \text{ only if }i_0+\ldots+i_n=d.
  \end{equation}
  Any $f\in k[T_1,\ldots, T_n]_{\ds{Q}}$ has a unique expression $f=f_0+\ldots+ f_N$ with terms arranged with increasing degree in which each $f_d$ is homogeneous of degree $d\in\ds{Q}_{\geq 0}$.
\end{define}
\begin{proposition}
  If $f$ is homogeneous of degree $d$ then
  \begin{equation}
    f(\lambda T_0,\ldots, \lambda T_n)=\lambda^d f(T_0,\ldots, T_n) \text{ for all }\lambda \in k;
  \end{equation}
  if $k$ is infinite then converse holds too.
\end{proposition}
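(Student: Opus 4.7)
The forward direction is a routine one-line computation: substituting $\lambda T_j$ for $T_j$ in a homogeneous expression pulls out a factor $\lambda^{i_0+\cdots+i_n}=\lambda^d$ from every monomial, so $f(\lambda T_0,\ldots,\lambda T_n)=\lambda^d f(T_0,\ldots,T_n)$. I would dispatch this first.

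For the converse, the plan is to decompose $f$ into its homogeneous components and then extract a polynomial identity in the scaling parameter. Write $f=f_{d_1}+\cdots+f_{d_r}$ where the $d_j\in\ds{Q}_{\geq 0}$ are pairwise distinct and each $f_{d_j}$ is homogeneous of degree $d_j$ (this is the unique decomposition already recorded in the preceding definition). The forward direction applied to each $f_{d_j}$ gives $f(\lambda T_0,\ldots,\lambda T_n)=\sum_j \lambda^{d_j} f_{d_j}(T_0,\ldots,T_n)$, so the hypothesis becomes
\begin{equation}
\sum_{j=1}^r\bigl(\lambda^{d_j}-\lambda^{d}\bigr)\,f_{d_j}(T_0,\ldots,T_n)=0\qquad\text{for all }\lambda\in k.
\end{equation}

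Now the task is to convert this into an honest polynomial identity so that infiniteness of $k$ can be used to force the coefficients to vanish. Let $N$ be a common denominator of $d$ and of all the $d_j$, and substitute $\lambda=\mu^N$ (since $k$ is algebraically closed, every element of $k$ is an $N$-th power, so $\mu$ still ranges over all of $k$). Then $\lambda^{d_j}=\mu^{Nd_j}$ with $Nd_j\in\ds{Z}_{\geq 0}$, and likewise $\lambda^d=\mu^{Nd}$, so we obtain the ordinary-exponent identity
\begin{equation}
\sum_{j=1}^r\bigl(\mu^{Nd_j}-\mu^{Nd}\bigr)\,f_{d_j}(T_0,\ldots,T_n)=0\qquad\text{for all }\mu\in k,
\end{equation}
which is a polynomial in $\mu$ of integer degree whose coefficients lie in $k[T_0,\ldots,T_n]_{\ds{Q}}$. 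Since $k$ is infinite, a polynomial vanishing at every $\mu\in k$ is the zero polynomial, so each coefficient in $\mu$ is identically zero in $k[T_0,\ldots,T_n]_{\ds{Q}}$.

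The final step is to read off the conclusion from the distinctness of the exponents: the integers $Nd_1,\ldots,Nd_r$ are pairwise distinct, so for any $j$ with $d_j\neq d$ the monomial $\mu^{Nd_j}$ does not cancel against $\mu^{Nd}$, forcing $f_{d_j}=0$. Hence only $f_d$ survives and $f=f_d$ is homogeneous of degree $d$. The only real obstacle is the bookkeeping with rational exponents, and the substitution $\lambda=\mu^N$ on an algebraically closed (in particular infinite) field handles it cleanly.
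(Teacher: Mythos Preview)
Your argument is correct and follows the same idea as the paper's (very terse) proof: turn the scaling identity into a one-variable polynomial identity and invoke the fact that over an infinite field a nonzero polynomial has only finitely many zeros. One small refinement: take $N$ to be a common denominator of \emph{all} exponents appearing in the monomials of $f$, not merely of $d$ and the $d_j$, so that the substitution $T_\ell\mapsto \mu^N T_\ell$ sends each $T_\ell^{i_\ell}$ unambiguously to $\mu^{Ni_\ell}T_\ell^{i_\ell}$ with $Ni_\ell\in\ds{Z}_{\geq 0}$.
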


\begin{proof}
  Only the converse needs to be proved. If $k$ is infinite then $f(X)$ vanishes on a finite set of $k$. Thus, $  f(\lambda T_0,\ldots, \lambda T_n)=\lambda^d f(T_0,\ldots, T_n)$ for almost all $k$ implying homogeneous of degree $d$.
\end{proof}

An immediate consequence is that if $F$ is homogeneous for $\lambda\in k\bs\{0\}$ \[F(x_1,\ldots,x_0)=0\iff F(\lambda x_0,\ldots, \lambda x_n)=0 \]

\subsection{Graded Rings and Homogeneous Ideals}
Recall that a ring or a module can be graded over a commutative monoid $\Delta$ as shown in \cite[p 363, Chapter II, \S 1]{bourbaki1998algebra}.

The graded polynomial ring is given as
\begin{equation}
  \begin{aligned}
    A&=\bigoplus_{d\in\ds{Q}_{\geq 0}} A_d,\quad\text{ where }\\
    A_d&=\{f\in k[T_0,\ldots,T_n]_{\ds{Q}}\text{ such that }f \text{ is homogeneous of degree }d\}\cup\{0\}
  \end{aligned}
   \end{equation}

\begin{define}
  An ideal $I\triangleleft k[T_0,\ldots T_n]_{\ds{Q}}$ is homogeneous if for all $f\in I$, there is a homogeneous decomposition of $f$
  \begin{equation}
    f=f_0+\ldots+f_N,\quad f_i\in I\text{ is homogeneous of degree $d_i$ for all }i.
  \end{equation}
The homogeneous ideal satisfies $I=\bigoplus_{d\geq 0}(I\cap A_d)$.
\end{define}
\begin{define}
  The irrelevant ideal is given by
  \begin{equation}
    A_+=\bigoplus_{d>0}A_d.
  \end{equation}
  The above can also be expressed as $A_+=\bigcup_{i\in\ds{Z}_{>0}}(T_0,\ldots, T_0^{1/i},\ldots, T_n,\ldots, T_n^{1/i})$.
\end{define}

\begin{define}
  \begin{enumerate}
    \item Let $S\subset k[T_0,\ldots,T_n]_{\ds{Q}}$ be a set of homogeneous polynomials, the projective algebraic set defined by $S$ is given as
    \begin{equation}
      V_p(S)=\{x\in\ds{P}^n_k\text{ such that }F\in S\text{ for all } F(x)=0 \}.
    \end{equation}
    \item Let $V\subset \ds{P}^n_k$, then the ideal of $V$ is given as
    \begin{equation}
      I_p(V)=\{F\in k[T_0,\ldots, T_n]_{\ds{Q}}\text{ such that }F(x)=0\text{ for all }x\in V\}.
    \end{equation}
    This is a homogeneous ideal.
    \item The graded ring associated to projective algebraic set $V$ is given as the graded quotient ring
    \begin{equation}
      \Gamma_h(V)=k[T_0,\ldots,T_n]_{\ds{Q}}/I_p(V)
    \end{equation}
    \item Let $f\in\Gamma_h(V)$ such that $\deg f>0$, the open sets are given as
    \begin{equation}
      D_+(f)=\{x\in V\text{ such that }f(x)\neq 0\}
    \end{equation}
  \end{enumerate}

\end{define}

Since, the rings at hand are non-noetherian it cannot be assumed that the set $S$ is finite. Once again, $V(I(V))$ is closure of $V$.
\begin{example}
The hyperplane at infinity is given by
\begin{equation}
  H_i=\{(x_0,x_1,\cdots:x_n)\in\ds{P}_k^n\text{ such that }x_i=0\}.
\end{equation}
Since, $k$ is a field this also translates to $x^j_i=0$ for $j\in\ds{Q}_{>0}$. The corresponding open set is given as
\begin{equation}
      U_i=\{(x_0,x_1,\cdots,x_n)\in\ds{P}_k^n\text{ such that }x_i\neq 0\}.
\end{equation}
\end{example}
The above example gives a covering of $\ds{P}_k^n=U_0\cup\ldots\cup U_n$. There is a bijection
\begin{equation}
\begin{aligned}
  j_i:U_i&\ra \ds{A}_k^n\\
  (x_0,\ldots,x_i,\ldots:x_n)&\mapsto \left(\frac{x_0}{x_i},\ldots, \frac{x_n}{x_i}\right)
\end{aligned}
\end{equation}

\section{Projective Nullstellensatz}
\subsection{Affine cone} Let $\pi:\ds{A}_k^{n+1}\{0,\ldots,0\}\ra\ds{P}^n_k$ be the canonical projection and $I\subset k[T_0,\ldots, T_n]_{\ds{Q}} $ a homogeneous ideal. There are two sets corresponding to this ideal the projective zero set $V(I)\in\ds{P}^n_k$ and the affine zero set $V^a(I)\subset\ds{A}^n_k $, also called the affine cone over projective set $Y=V(I)$ and denoted as $C(Y)$.
\begin{equation}
  \begin{aligned}
    V^a(I)=\pi^{-1}(V(I))\cup\{0,\ldots, 0\} \text{ for }I\subsetneq k[T_0,\ldots, T_n]_{\ds{Q}}.\\
    (x_0,\ldots,x_n)\in V^a(I)\iff (\lambda x_0,\ldots,\lambda x_n)\in  V^a(I) \text{ for all }\lambda\in\tio{k}.
  \end{aligned}
  \end{equation}
If $I=k[T_0,\ldots, T_n]_{\ds{Q}}$ then $C(Y)=0$. The affine cone often helps to reduce a projective problem to an affine problem.

\begin{theorem}
  Let $k$ be an algebraically closed field and $J\subset k[T_0,\ldots, T_{n+1}]_\ds{Q}$ a finitely generated homogeneous ideal, then:
  \begin{enumerate}
    \item $V(J)=\varnothing$ if and only if $\rad(J)\supset (T_0,\ldots,T_n)$.
    \item If $V(J)\neq\varnothing$, then $I(V(J))=\rad(J)$.
  \end{enumerate}
\end{theorem}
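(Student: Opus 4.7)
The plan is to reduce both parts to the finitely generated affine Nullstellensatz corollary by passing through the affine cone $V^a(J)\subseteq \ds{A}^{n+1}_k$, exploiting the fact that $V^a(J)=\pi^{-1}(V(J))\cup\{0\}$ and that $V^a(J)$ is stable under scalar multiplication by $\tio{k}$.

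For part (1), the direction $(\Leftarrow)$ is immediate: if each $T_i\in\rad(J)$, choose $N_i$ with $T_i^{N_i}\in J$; then every $(x_0,\ldots,x_{n+1})\in V^a(J)$ satisfies $x_i^{N_i}=0$, so $x_i=0$ for all $i$, forcing $V^a(J)\subseteq\{0\}$ and hence $V(J)=\varnothing$ in projective space. For $(\Rightarrow)$, if $V(J)=\varnothing$ then $\pi^{-1}(V(J))=\varnothing$, so $V^a(J)\subseteq\{0\}$; this yields the ideal inclusion $I(V^a(J))\supseteq (T_0,\ldots,T_n)$. Since $J$ is finitely generated, the earlier corollary gives $I(V^a(J))=\rad(J)$, from which the desired containment follows.

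For part (2), the central step is the identification $I_p(V(J))=I(V^a(J))$ as ideals in $k[T_0,\ldots,T_n]_{\ds{Q}}$; once this is established, finite generation of $J$ and the affine Nullstellensatz corollary give $I(V^a(J))=\rad(J)$, completing the proof. The inclusion $I_p(V(J))\subseteq I(V^a(J))$ follows by decomposing an element into its homogeneous pieces: a piece of positive degree vanishes on $\pi^{-1}(V(J))$ by homogeneity and at the origin because its degree is positive, while any degree-zero piece must be $0$ since $V(J)\neq\varnothing$. For the reverse inclusion, take $x\in V^a(J)\setminus\{0\}$ and $F=\sum_d F_d\in I(V^a(J))$; stability of $V^a(J)$ under scaling gives $\sum_d \lambda^d F_d(x)=0$ for every $\lambda\in\tio{k}$. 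After substituting $\mu=\lambda^{1/N}$ for a common denominator $N$ of the finitely many rational degrees appearing in $F$, this becomes a single-variable polynomial in $\mu$ vanishing on the infinite set $\tio{k}$, so every $F_d(x)=0$; thus each $F_d\in I_p(V(J))$ and $F\in I_p(V(J))$.

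The main obstacle is the linear-independence argument for distinct rational powers $\lambda^d$, but reducing to integer exponents through the common-denominator substitution handles it uniformly. A small bookkeeping point is that $F$ has only finitely many nonzero homogeneous components (since $F$ lies in some intermediate ring $B_i=k[T_0,\ldots,T_0^{1/i},\ldots,T_n,\ldots,T_n^{1/i}]$ whose graded pieces are finite-dimensional), which is precisely what allows the common denominator $N$ to exist.
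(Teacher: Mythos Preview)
Your proposal is correct and follows the same affine-cone reduction as the paper. The paper's proof of part (2) simply asserts the chain $f\in I(V(J))\iff f\in I(V^a(J))\iff f\in\rad(J)$ without justification; your common-denominator substitution (turning the identity $\sum_d \lambda^d F_d(x)=0$ into an integer-degree polynomial in $\mu$ vanishing on $\tio{k}$) is exactly the detail needed to make the first equivalence rigorous in the rational-degree setting, and is a genuine addition over what the paper writes.
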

\begin{proof}
  \begin{enumerate}
    \item   Let $x\sim y$ iff $x=\lambda y, \lambda\in\tio{k}$ for $x,y\in V^a(J)$, then
      \begin{equation}
        \begin{aligned}
          \ds{P}^n_k\supset V(J)&= (V^a(J)-\{0,\ldots, 0\})/\sim\\
          \text{ or }V(J)&=\varnothing \iff V^a(J)\subset\{0,\ldots,0\}
        \end{aligned}
      \end{equation}
    Affine Nullstellensatz then gives $\rad(J)\supset (T_0,\ldots, T_n)$.

    \item If $V(J)\neq \varnothing$, then from affine Nullstellensatz
    \begin{equation}
      f\in I(V(J))\iff f\in I(V^a(J)) \iff f\in\rad(J).
    \end{equation}
  \end{enumerate}

\end{proof}
\subsection{Standard Affine Charts} Let $X\subset\ds{P}_k^n$ be an algebraic set not contained in any of the hyperplanes of $\ds{P}_k^n$, then $X$ can be covered with $X_{(i)}$ where each $X_{(i)}=X\cap U_i$. Hence, each $X_{(i)}\subset \ds{A}_k^n$ is an affine algebraic set. If $I(X)$ is the homogeneous ideal associated to $X$ and $X_{(n)}=X\cap U_n$, then the following hold
\begin{equation}
  \begin{aligned}
    I(X_{(n)})&=\{f(T_0,\ldots, T_{n-1},1)\text{ such that }f\in I(X)\}\\
    I(X)_d&=\left\{T^{d}_n\cdot f\left(\frac{T_0}{T_{n}},\ldots, \frac{T_{n-1}}{T_{n}}\right)\text{ such that }f\in I(X_{(n)})\text{ and }\deg f\leq d\right\}
  \end{aligned}
\end{equation}
where $I(X)_d$ denotes the degree $d$ homogeneous part. Hence, there is a correspondence
\begin{equation}
  \{\text{algebraic sets $X\subset \ds{P}_k^n$ such that $X\not\subset H_i$}\}\leftrightarrow   \{\text{algebraic sets $X_{(i)}\subset U_i\simeq \ds{A}_k^n$} \}
\end{equation}

\section{Schemes}\label{sch1} Let $X=\spc~A$ endowed with Zariski topology and $f\in A$, the open set and closed sets are given as
\begin{equation}
  D(f):=\spc~A\bs V(f)\qquad V(f):=\{\id{p}\in\spc~A\text{ such that }f\in\id{p}\}
\end{equation}
Recall that $\spc~A$ is quasicompact and $V(I)\subset V(J)$ if and only if $J\subset\rad{I}$. Thus it is possible to cover the space with finitely open sets $\{D(f_i)\}$ which form the base of the space and define the sheaf on open sets as $\mathscr{O}_X(D(f))=A_f$. The localization is done at the multiplicatively closed set $\{1,f,f^2,f^3,\ldots\}$. Hence, the affine scheme is of the form
\begin{equation}
  (\spc~k[T_1,\ldots,T_n]_{\ds{Q}},\mathscr{O}_{\spc~k[T_1,\ldots,T_n]_{\ds{Q}}}).
  \end{equation}
The affine pieces can be glued together to get projective space. Let $0\leq i,j\leq n$ and
\begin{equation}
  X_i=\spc~k\left[\frac{T_0}{T_i},\ldots,\frac{T_n}{T_i}\right]_{\ds{Q}},\qquad X_{ij}=D\left(\frac{T_j}{T_i}\right)\subset X_i.
\end{equation}

Note that $X_{ii}=D(1)$ and thus corresponds to all of $X_i$ (none of the prime ideals of $X_i$ contain 1). This gives
\begin{equation}
  \begin{aligned}
    \mathscr{O}_{X_i}(X_{ij})&=k\left[\frac{T_0}{T_i},\ldots,\frac{T_j}{T_i},\ldots\frac{T_n}{T_i}\right]_{\ds{Q}}\text{ localised at }\frac{T_j}{T_i}\\
    \mathscr{O}_{X_i}(X_{ij})&=k\left[\frac{T_0}{T_i},\ldots,\frac{T_j}{T_i},\ldots\frac{T_n}{T_i},\frac{T_i}{T_j} \right]_{\ds{Q}}\\
  \text{Change Variable to get an} & \text{  isomorphism}\\
    &=k\left[\frac{T_0}{T_j},\ldots,\frac{T_i}{T_j},\ldots\frac{T_n}{T_j},\frac{T_j}{T_i} \right]_{\ds{Q}}\\
    &=k\left[\frac{T_0}{T_j},\ldots,\frac{T_i}{T_j},\ldots\frac{T_n}{T_j}\right]_{\ds{Q}}\text{ localised at }\frac{T_i}{T_j}\\
    &=\mathscr{O}_{X_j}(X_{ji})
  \end{aligned}
\end{equation}
\begin{remark}
  The ring $k[X,Y]_{\ds{Q}}$ can be localized at the multiplicatively closed set $\{X^r\}_{r\in\ds{Q}_{\geq0}}$ or at the multiplicatively closed set $\{1,X,X^2,\ldots\}=\{X^r\}_{r\in\ds{Z}_{\geq 0}}$, it will yield the same ring. For some $f\in k[X,Y]_{\ds{Q}}$, the set
  $f^r,r\in\ds{Q}_{>0}$ might not be defined, or $f^r$ may yield a power series via fractional newton Binomial theorem which does not lie in the ring  $k[X,Y]_{\ds{Q}}$. The main idea is to work with multiplicatively closed set with integer degrees, so that the results from standard algebraic geometry are carried over verbatim.
\end{remark}

\section{Proj and Twisting Sheaves $\mathscr{O}(n)$}\label{sec-twist}

Let $A$ be a graded ring over $\ds{Q}$ it can be endowed with decomposition $A=\oplus_{d\geq 0}A_d$ of abelian groups such that $A_dA_e\subseteq A_{d+e}$ for all $d,e\geq 0$ where $d,e\in\ds{Q}$. Similarly graded $A$ modules can be defined with $A_dM_e\subseteq M_{d+e}$ for all $d,e\geq 0$ and $d,e\in\ds{Q}.$ A homogeneous ideal is of the form $I=\oplus_{d\geq 0}(I\cap A_d)$ and the quotient $A/I$ has a natural grading $(A/I)_d=A_d/(I\cap A_d)$. Let $\Proj A$ denote the set of prime ideals of $A$ that do not contain $A_+:=\oplus_{d>0}A_d$, then $\Proj A$ can be endowed with the structure of a scheme. The closed and open sets for homogeneous ideals $I$ are of the form
\begin{equation}
  \begin{aligned}
    V_+(I)&:=\{\id{p}\in\Proj A\text{ such that }I\subseteq \id{p}\}\\
    D_+(f)&:=\Proj A\bs V(fA).
  \end{aligned}
\end{equation}

Let $n\in\ds{Q}$ and $A$  a $\ds{Q}$ module, define a new graded $A$ module  $A(n)_d:=A_{n+d}$ for all $d\in \ds{Q}$, define $\mathscr{O}_X(n):=A(n)\tilde{\phantom{b}}$. The $\tilde{\phantom{b}}$ denotes the localization on the affine open. Thus, on $D_+(f)$ an affine open subset  $\mathscr{O}_X(n)\vert_{D_+(f)}=f^n\mathscr{O}_X\vert_{D_+(f)}$, furthermore, the usual equality holds $\mathscr{O}_X(n)\otimes_{\mathscr{O}_X}{\mathscr{O}_X(m)}=\mathscr{O}_X(n+m)$.

\subsection{$\mathscr{O}(1)$} Consider $\Proj~k[X_0,X_1]_{\ds{Q}}$, the affine open sets are
\begin{equation}
  U_0=D(X_0)=\spc~k[(X_1/X_0)]_{\ds{Q}}\qquad \text{ and }\qquad U_1=D(X_1)=\spc~k[(X_0/X_1)]_{\ds{Q}}.
\end{equation}
The transition function from $U_1$ to $U_0$ is given as $X_1/X_0$. Let $r\in \ds{Q}\cap (0,1)$

\begin{equation}
  \begin{aligned}
\text{Global Section}&\quad    X_0+X_0^{r}X_1^{1-r}+X_1\\
  U_1  &\quad(X_0/X_1)+(X_0/X_1)^{r}+1\\
  U_0  &\quad 1+(X_1/X_0)^{1-r}+(X_1/X_0)
  \end{aligned}
\end{equation}

\subsection{Computing Cohomology}\label{Computing-Coho}
The global sections of degree 2 of $\Proj~R[X,Y]$ are generated by $X^2,XY,Y^2$, where as the global sections of degree 2 of $\Proj~R[X,Y]_{\ds{Q}}$ are given as $X^2, Y^2, XY, X^{r_i}Y^{1-r_i}$ for $r_i\in\ds{Q}\cap(0,2)$, and are thus infinitely many.

\begin{theorem}\label{t1}
  Let $S=R[X_0,\ldots, X_n]_{\ds{Q}}$ and $X=\Proj~S$, then for any $n\in\ds{Q}$
\begin{enumerate}
\item There is an isomorphism $S\simeq \oplus_{n\in\ds{Q}} H^0(X,\mathscr{O}_X(n))$.
\item $H^n(X,\curly{O}_{X}(-n-1))$ is a free module of infinite rank.
\end{enumerate}
\end{theorem}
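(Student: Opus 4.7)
The plan is to compute the \v{C}ech cohomology directly, adapting Hartshorne~(III.5.1) to the rational-degree setting. I take the standard affine cover $\mathcal{U}=\{U_i=D_+(X_i)\}_{i=0}^{n}$; each nonempty intersection $U_{i_0\cdots i_p}=D_+(X_{i_0}\cdots X_{i_p})$ is affine, with sections $\Gamma(U_{i_0\cdots i_p},\mathscr{O}_X(m))=(S_{X_{i_0}\cdots X_{i_p}})_m$, the degree-$m$ piece of the localisation. If one wishes to identify the \v{C}ech groups with derived-functor cohomology, one passes to the finite stages $S^{(N)}:=R[X_0,\ldots,X_0^{1/N},\ldots,X_n,\ldots,X_n^{1/N}]$ where Hartshorne applies verbatim, and takes the direct limit; the finiteness of $\mathcal{U}$ and quasi-compactness of $X$ let the limit pass through $H^{\bullet}$.

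For part~(1), I compute $H^0$ as the kernel of
\[
d^0\colon\prod_{i=0}^{n}(S_{X_i})_m \longrightarrow \prod_{i<j}(S_{X_iX_j})_m.
\]
An element of $(S_{X_i})_m$ is a finite $R$-linear combination of monomials $X_0^{l_0}\cdots X_n^{l_n}$ with $l_k\in\ds{Q}_{\geq 0}$ for $k\neq i$, $l_i\in\ds{Q}$, and $\sum_k l_k=m$. Agreement on the overlap $D_+(X_iX_j)$ forces $l_i$ and $l_j$ both non-negative; running this over every pair, every exponent ends up in $\ds{Q}_{\geq 0}$, so $\ker d^0=S_m$ (and is zero for $m<0$). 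Summing over $m\in\ds{Q}$ yields $S\simeq\bigoplus_m H^0(X,\mathscr{O}_X(m))$.

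For part~(2), I examine the top \v{C}ech term $C^n=(S_{X_0\cdots X_n})_{-n-1}$, which is a free $R$-module on monomials $X_0^{l_0}\cdots X_n^{l_n}$ with $l_i\in\ds{Q}$ and $\sum_i l_i=-n-1$ (linear independence ascends from the finite-stage Laurent rings $R[X_0^{\pm 1/N},\ldots,X_n^{\pm 1/N}]$). The image of $d^{n-1}$ is the $R$-span of those monomials with at least one $l_j\geq 0$, these being precisely the monomials pulled back from some $(S_{X_0\cdots\widehat{X_j}\cdots X_n})_{-n-1}$. Hence $H^n(X,\mathscr{O}_X(-n-1))$ is the free $R$-module on
\[
\bigl\{(l_0,\ldots,l_n)\in\ds{Q}_{<0}^{n+1}:\textstyle\sum_i l_i=-n-1\bigr\},
\]
or equivalently, via $m_i=-l_i$, on $\{(m_0,\ldots,m_n)\in\ds{Q}_{>0}^{n+1}:\sum_i m_i=n+1\}$, an infinite set. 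In the integer-degree case this index set collapses to the singleton $(1,\ldots,1)$, recovering the classical rank-one statement; the infinite rank here is exactly the surplus of rational monomials.

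The main obstacle I anticipate is the comparison with derived-functor cohomology in this non-Noetherian, $\ds{Q}$-graded setting (should one wish to read $H^{\bullet}$ that way rather than as \v{C}ech). The finite-stage-plus-direct-limit route should settle it, but one must check that the schemes $\Proj S^{(N)}$ cohere in a way that commutes $H^{\bullet}$ with the colimit. Once this is granted, the remainder is monomial bookkeeping parallel to Hartshorne's, the only genuinely new feature being that strict negativity of the $l_i$ no longer pins them down to $-1$.
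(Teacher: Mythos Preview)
Your proposal is correct and follows essentially the same route as the paper: both take the standard cover $\{D_+(X_i)\}$, identify $H^0$ as $\bigcap_i S_{X_i}=S$ via the kernel of $\prod S_{X_i}\to\prod S_{X_iX_j}$, and realise $H^n$ as the cokernel of $d^{n-1}$, with basis the all-negative monomials in $S_{X_0\cdots X_n}$, whose degree-$(-n-1)$ piece is infinite. Your write-up is in fact more careful than the paper's (which simply cites Hartshorne for part~(1) and sketches part~(2)); the direct-limit comparison with derived-functor cohomology that you flag is not addressed in the paper at all, which works purely at the \v{C}ech level.
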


\begin{proof}
\begin{enumerate}
  \item
  Take the standard cover by affine sets $\id{U}=\{U_i\}_{i}$ where each $U_i=D(X_i), i=0,\ldots,n$. The global sections are given
  as the kernel of the following map
   \begin{equation}
   \prod S_{X_{i_0}}\longrightarrow \prod S_{X_{i_0}X_{i_1}}
    \end{equation}

    The element mapping to the Kernel has to lie in all the intersections $S=\cap_i S_{X_i}$, as given on \cite[pp 118]{hartshorne1977algebraic} and is thus the ring $S$ itself.

    \item  $H^n(X,\mathscr{O}_X(-m))$ is the cokernel of the map

    \begin{equation}
    d^{n-1}:\prod_k S_{X_0\cdots \hat{X}_k\cdots X_n}\longrightarrow S_{X_0\cdots X_n}
    \end{equation}

    $S_{X_0\cdots X_n}$ is a free $R$ module with basis $X_0^{l_0}\cdots X_n^{l_n}$ with each $l_i\in\ds{Q}$. The image of $d^{n-1}$ is the free submodule generated by those basis elements with atleast one $l_i\geq 0$. Thus $H^n$ is the free module with basis as negative monomials

    \begin{equation}
    \{X_0^{l_0}\cdots X_n^{l_n}\}\text{ such that }l_i<0
    \end{equation}

    The grading is given by $\sum l_i$ and there are infinitely many monomials with degree $-n-\epsilon$ where $\epsilon$ is something very small and $\epsilon\in\ds{Q}$. Recall, that in the standard coherent cohomology there is only one such monomial $X_0^{-1}\cdots X_n^{-1}$ . For example, in case of $\ds{P}^2$ we have $X_0^{-1}X_1^{-1}X_2^{-1}$ but for the case at hand we also have   $X_0^{-1/2}X_1^{-1/2}X_2^{-2}$.

    Recall that in coherent cohomology of $\ds{P}^n$ the dual basis of $X_0^{m_0}\cdots X_n^{m_n}$ is given by $X_0^{-m_0-1}\cdots X_n^{-m_n-1}$ and the operation of multiplication gives pairing. We do not have this pairing here, but we can pair $X_0^{m_0}$ with $X_0^{-m_0}$.
\end{enumerate}
  \end{proof}

\pagebreak

  \begin{theorem}\label{t2} Let $S=R[X_0,\ldots, X_n]_{\ds{Q}}$ and $X=\Proj~R$, then
$H^i(X,\curly{O}_X(m))=0$ if $0<i<n$.
\end{theorem}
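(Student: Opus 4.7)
The plan is to mirror the Čech calculation from the proof of Theorem \ref{t1}, working with the standard affine cover $\mathcal{U} = \{U_i = D_+(X_i)\}_{i=0}^n$ and the total twisted sheaf $\bigoplus_{m \in \mathbb{Q}} \mathcal{O}_X(m)$ all at once, and then decomposing the Čech complex according to the rational multi-index of monomials. Since each intersection $U_{i_0\cdots i_p} = D_+(X_{i_0}\cdots X_{i_p})$ is affine and $\mathcal{O}_X(m)$ is quasi-coherent, Čech cohomology on $\mathcal{U}$ computes the sheaf cohomology. The degree-$p$ Čech term is $\prod S_{X_{i_0}\cdots X_{i_p}}$, and the key observation is that $S_{X_{i_0}\cdots X_{i_p}}$ is a free $R$-module with basis the monomials $X^{\mathbf{l}} = \prod_j X_j^{l_j}$ satisfying $l_j \in \mathbb{Q}$ for $j \in \{i_0,\ldots,i_p\}$ and $l_j \in \mathbb{Q}_{\geq 0}$ otherwise.

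For each $\mathbf{l} \in \mathbb{Q}^{n+1}$, let $N_{\mathbf{l}} := \{i : l_i < 0\}$ and $M_{\mathbf{l}} := \{0,\ldots,n\} \setminus N_{\mathbf{l}}$. The Čech complex splits as $C^{\bullet} = \bigoplus_{\mathbf{l}} C^{\bullet}_{\mathbf{l}}$, where $C^p_{\mathbf{l}}$ is the free $R$-module on the $(p+1)$-subsets $T \subseteq \{0,\ldots,n\}$ containing $N_{\mathbf{l}}$, and the Čech differential restricts to the alternating-sum map on such subsets. After the bijection $T \mapsto T \setminus N_{\mathbf{l}}$, one recognises $C^{\bullet}_{\mathbf{l}}$, shifted cohomologically by $|N_{\mathbf{l}}|-1$, as a piece of the simplicial cochain complex of the full simplex on $M_{\mathbf{l}}$.

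The case analysis completes the argument. If $N_{\mathbf{l}} = \emptyset$, the shift is $-1$ and the would-be augmentation term sits outside the Čech complex; what survives is the unaugmented simplicial cochain complex of $\Delta^n$, contributing $R$ to $H^0$ only and recovering $S \simeq \bigoplus_m H^0(X,\mathcal{O}_X(m))$ from Theorem \ref{t1}. If $N_{\mathbf{l}} = \{0,\ldots,n\}$, the complex is concentrated in top degree, contributing $R$ to $H^n$ and recovering the infinite-rank description of $H^n(X,\mathcal{O}_X(-n-1))$. The remaining case $0 < |N_{\mathbf{l}}| \leq n$ gives $|M_{\mathbf{l}}| \geq 1$; the shift now includes the augmentation, so $C^{\bullet}_{\mathbf{l}}$ is the full augmented simplicial cochain complex of the nonempty simplex $\Delta^{|M_{\mathbf{l}}|-1}$, which is contractible and therefore acyclic. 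Summing over those $\mathbf{l}$ with $\sum_j l_j = m$ isolates $H^i(X, \mathcal{O}_X(m))$, and only $N_{\mathbf{l}} \in \{\emptyset,\{0,\ldots,n\}\}$ contribute — and only to degrees $0$ and $n$. Hence $H^i(X,\mathcal{O}_X(m))=0$ for $0<i<n$.

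The main obstacle is bookkeeping rather than substance: one must carefully verify the shift by $|N_{\mathbf{l}}|-1$ and confirm that the augmentation term is or is not present in each case, especially at the boundary $|M_{\mathbf{l}}|=1$ (where $C^{\bullet}_{\mathbf{l}}$ is the two-term complex $R \xrightarrow{\mathrm{id}} R$ in degrees $n-1$ and $n$, manifestly acyclic). The replacement of $\mathbb{Z}$-exponents by $\mathbb{Q}$-exponents only enlarges the monomial basis of each $S_{X_{i_0}\cdots X_{i_p}}$; the combinatorial structure of $C^{\bullet}_{\mathbf{l}}$ is identical to the integer case, which is why Hartshorne's proof (III.5.1) adapts unchanged.
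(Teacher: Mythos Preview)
Your argument is correct and shares the paper's core strategy: compute \v{C}ech cohomology on the standard cover and decompose the complex monomial-by-monomial according to the set $N_{\mathbf{l}}$ of negative exponents. The paper carries this out for $n=2$ with explicit diagrams (Figures~\ref{check2}--\ref{check8}), treating the cases of $3$, $2$, $1$, and $0$ negative exponents by hand and invoking a short exact sequence of complexes for the last case. Your execution is more structural: you identify each $C^\bullet_{\mathbf{l}}$, up to the shift by $|N_{\mathbf{l}}|-1$, with the augmented or unaugmented simplicial cochain complex of $\Delta^{|M_{\mathbf{l}}|-1}$, which makes the vanishing for $0<|N_{\mathbf{l}}|\leq n$ immediate and handles all $n$ uniformly. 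The paper's pictorial treatment buys concreteness and is closer to Vakil's exposition it cites; your treatment buys generality and avoids the ad hoc SES step in the $0$-negative case. Substantively they are the same proof.
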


\begin{proof}
  The proof from \cite[pp 474-475]{ravi2} is adapted to the case at hand, using the convention that $\Gamma$ denotes global sections. We will work with $n=2$ for the sake of clarity, the case for general $n$ is identical. The \v{C}ech complex is given in figure \ref{check1}.

  \begin{figure}[H]
  \centering
   \begin{tikzpicture}
   []
          \matrix (m) [
              matrix of math nodes,
              row sep=0.5em,
              column sep=2.5em,
                     ]
                     {
                      ~ & ~ &  |[name=qa1]|U_0 & |[name=qb1]|U_{0}\cap U_1 &~&~\\
                      ~ & ~ & \oplus & \oplus &~&~\\
                      |[name=qka]| 0 &|[name=qkb]| \ds{P}^2 & |[name=qkc]| U_1 &|[name=qkd]| U_{0}\cap U_2 & |[name=qke]| U_{0}\cap U_1\cap U_2 &|[name=qkf]|0\\
                       ~ & ~ & \oplus & \oplus &~&~\\
                      ~ & ~ & |[name=qa2]| U_2 & |[name=qb2]|U_1\cap U_2 &~&~\\
                      ~ & ~ &  |[name=a1]|\Gamma_{X_0}& |[name=b1]|\Gamma_{X_0 ,X_1  }&~&~\\
                      ~ & ~ & \oplus & \oplus &~&~\\
                      |[name=ka]| 0 &|[name=kb]| \Gamma & |[name=kc]|\Gamma_{X_1 }&|[name=kd]|\Gamma_{X_0 ,X_2  }&|[name=ke]|\Gamma_{X_0 ,X_1 ,X_2 }&|[name=kf]|0\\
                       ~ & ~ & \oplus & \oplus &~&~\\
                      ~ & ~ & |[name=a2]| \Gamma_{X_2}& |[name=b2]|\Gamma_{X_1 ,X_2  }&~&~\\
                             };
          \path[overlay,->, font=\scriptsize,>=latex]
          (ka) edge (kb)
           (kb) edge (kc)

            (kd) edge (ke)
            (ke) edge (kf)

              (kb) edge (a1)
              (kb) edge (a2)
               (a1) edge (b1)
   (a1) edge (kd)
   (a2) edge (b2)
  (a2) edge (kd)
  (kc) edge (b1)
  (kc) edge (b2)
  (b1) edge (ke)
  (b2) edge (ke)
   ;

    \path[overlay,<-, font=\scriptsize,>=latex]

          (qka) edge (qkb)
           (qkb) edge (qkc)
                      (qkd) edge (qke)
            (qke) edge (qkf)

              (qkb) edge (qa1)
              (qkb) edge (qa2)
               (qa1) edge (qb1)
   (qa1) edge (qkd)
   (qa2) edge (qb2)
  (qa2) edge (qkd)
  (qkc) edge (qb1)
  (qkc) edge (qb2)
  (qb1) edge (qke)
  (qb2) edge (qke)
   ;
  \end{tikzpicture}
  \caption{\v{C}ech Complex for $n=2$}\label{check1}
  \end{figure}
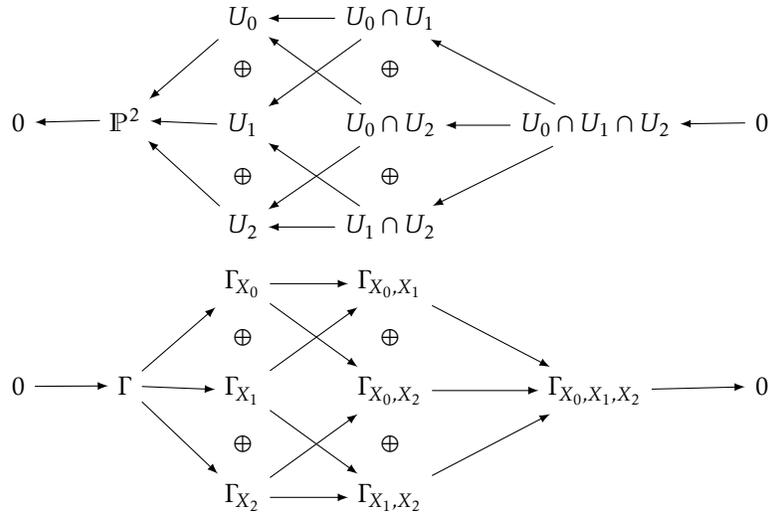

  \begin{description}
  \item[$3$ negative exponents] The monomial $X_0^{a_0}\cdot X_1^{a_1}\cdot X_2^{a_2}$ where $a_i<0$. We cannot lift it to any of the coboundaries (that is lift only to $0$ coefficients). If $K_{012}$ denotes the coefficient of the monomial in the complex (Figure \ref{check2}), we get zero cohomology except for the spot corresponding to $U_0\cap U_1\cap U_2$.

  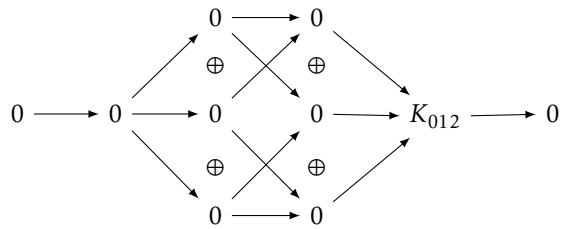
\begin{figure}[H]
  \centering
   \begin{tikzpicture}
   []
          \matrix (m) [
              matrix of math nodes,
              row sep=0.5em,
              column sep=2.5em,
                     ]
  { ~ & ~ &  |[name=a1]|0 & |[name=b1]|0 &~&~\\
   ~ & ~ & \oplus & \oplus &~&~\\
   |[name=ka]| 0 &|[name=kb]| 0 & |[name=kc]|0 &|[name=kd]|0 &|[name=ke]|K_{012}&|[name=kf]|0\\
    ~ & ~ & \oplus & \oplus &~&~\\
   ~ & ~ & |[name=a2]| 0& |[name=b2]|0 &~&~\\
          };

          \path[overlay,->, font=\scriptsize,>=latex]
          (ka) edge (kb)
           (kb) edge (kc)

            (kd) edge (ke)
            (ke) edge (kf)

              (kb) edge (a1)
              (kb) edge (a2)
               (a1) edge (b1)
   (a1) edge (kd)
   (a2) edge (b2)
  (a2) edge (kd)
  (kc) edge (b1)
  (kc) edge (b2)
  (b1) edge (ke)
  (b2) edge (ke)
   ;
  \end{tikzpicture}
  \caption{$3$ negative exponents}\label{check2}
  \end{figure}
  \pagebreak
  \item[$2$ negative exponents] The monomial $X_0^{a_0}\cdot X_1^{a_1}\cdot X_2^{a_2}$ where two exponents are negative, say $a_0,a_1<0$. Then we can perfectly lift to coboundary coming from $U_0\cap U_1$, which gives exactness.

  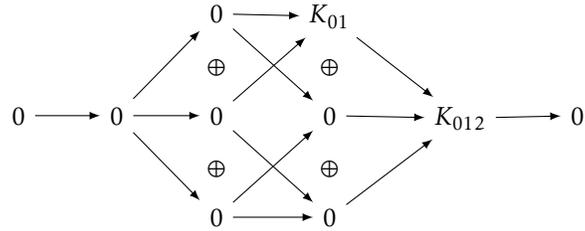
\begin{figure}[H]
  \centering
   \begin{tikzpicture}
   []
          \matrix (m) [
              matrix of math nodes,
              row sep=0.5em,
              column sep=2.5em,
                     ]
  { ~ & ~ &  |[name=a1]|0 & |[name=b1]|K_{01} &~&~\\
   ~ & ~ & \oplus & \oplus &~&~\\
   |[name=ka]| 0 &|[name=kb]| 0 & |[name=kc]|0 &|[name=kd]|0 &|[name=ke]|K_{012}&|[name=kf]|0\\
    ~ & ~ & \oplus & \oplus &~&~\\
   ~ & ~ & |[name=a2]| 0& |[name=b2]|0 &~&~\\
          };

          \path[overlay,->, font=\scriptsize,>=latex]
          (ka) edge (kb)
           (kb) edge (kc)

            (kd) edge (ke)
            (ke) edge (kf)

              (kb) edge (a1)
              (kb) edge (a2)
               (a1) edge (b1)
   (a1) edge (kd)
   (a2) edge (b2)
  (a2) edge (kd)
  (kc) edge (b1)
  (kc) edge (b2)
  (b1) edge (ke)
  (b2) edge (ke)
   ;
  \end{tikzpicture}
  \caption{$2$ negative exponents}\label{check3}
  \end{figure}

  \item[$1$ negative exponent] The monomial $X_0^{a_0}\cdot X_1^{a_1}\cdot X_2^{a_2}$ where one exponents is negative, say $a_0<0$, we get the complex (Figure \ref{check4}). Notice that $K_0$ maps injectively giving zero cohomology group.

  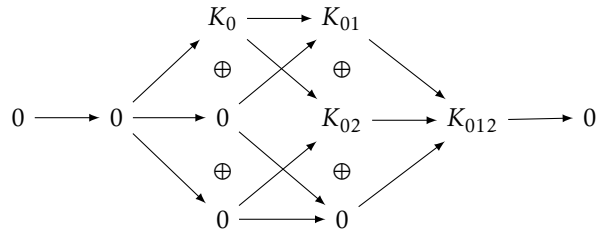
\begin{figure}[H]
  \centering
   \begin{tikzpicture}
   []
          \matrix (m) [
              matrix of math nodes,
              row sep=0.5em,
              column sep=2.5em,
                     ]
  { ~ & ~ &  |[name=a1]|K_0 & |[name=b1]|K_{01} &~&~\\
   ~ & ~ & \oplus & \oplus &~&~\\
   |[name=ka]| 0 &|[name=kb]| 0 & |[name=kc]|0 &|[name=kd]|K_{02} &|[name=ke]|K_{012}&|[name=kf]|0\\
    ~ & ~ & \oplus & \oplus &~&~\\
   ~ & ~ & |[name=a2]| 0& |[name=b2]|0 &~&~\\
          };

          \path[overlay,->, font=\scriptsize,>=latex]
          (ka) edge (kb)
           (kb) edge (kc)

            (kd) edge (ke)
            (ke) edge (kf)

              (kb) edge (a1)
              (kb) edge (a2)
               (a1) edge (b1)
   (a1) edge (kd)
   (a2) edge (b2)
  (a2) edge (kd)
  (kc) edge (b1)
  (kc) edge (b2)
  (b1) edge (ke)
  (b2) edge (ke)
   ;
  \end{tikzpicture}
  \caption{$1$ negative exponent}\label{check4}
  \end{figure}

  Furthermore, the mapping in the Figure \ref{check5} gives Kernel when $f=g$ which is possible for zero only. Again giving us zero cohomology groups.
  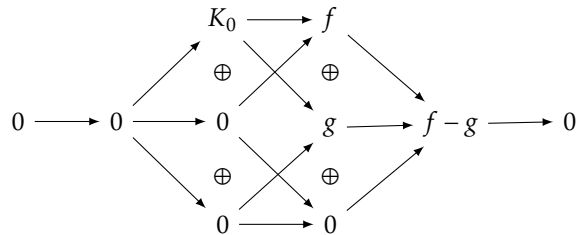
\begin{figure}[H]
  \centering
   \begin{tikzpicture}
   []
          \matrix (m) [
              matrix of math nodes,
              row sep=0.5em,
              column sep=2.5em,
                     ]
  { ~ & ~ &  |[name=a1]|K_0 & |[name=b1]|f &~&~\\
   ~ & ~ & \oplus & \oplus &~&~\\
   |[name=ka]| 0 &|[name=kb]| 0 & |[name=kc]|0 &|[name=kd]|g &|[name=ke]|f-g&|[name=kf]|0\\
    ~ & ~ & \oplus & \oplus &~&~\\
   ~ & ~ & |[name=a2]| 0& |[name=b2]|0 &~&~\\
          };

          \path[overlay,->, font=\scriptsize,>=latex]
          (ka) edge (kb)
           (kb) edge (kc)

            (kd) edge (ke)
            (ke) edge (kf)

              (kb) edge (a1)
              (kb) edge (a2)
               (a1) edge (b1)
   (a1) edge (kd)
   (a2) edge (b2)
  (a2) edge (kd)
  (kc) edge (b1)
  (kc) edge (b2)
  (b1) edge (ke)
  (b2) edge (ke)
   ;
  \end{tikzpicture}
  \caption{Mapping for $1$ negative exponent}\label{check5}
  \end{figure}

  \pagebreak

  \item[$0$ negative exponent] The monomial $X_0^{a_0}\cdot X_1^{a_1}\cdot X_2^{a_2}$ where none of the exponents is negative $a_i>0$, gives the complex Figure \ref{check7}.

  \begin{figure}[H]
  \centering
   \begin{tikzpicture}
   []
          \matrix (m) [
              matrix of math nodes,
              row sep=0.5em,
              column sep=2.5em,
                     ]
  { ~ & ~ &  |[name=a1]|K_0 & |[name=b1]|K_{01} &~&~\\
   ~ & ~ & \oplus & \oplus &~&~\\
   |[name=ka]| 0 &|[name=kb]| K_{H^0} & |[name=kc]|K_1 &|[name=kd]|K_{02} &|[name=ke]|K_{012}&|[name=kf]|0\\
    ~ & ~ & \oplus & \oplus &~&~\\
   ~ & ~ & |[name=a2]| K_2& |[name=b2]|K_{12} &~&~\\
          };

          \path[overlay,->, font=\scriptsize,>=latex]
          (ka) edge (kb)
           (kb) edge (kc)

            (kd) edge (ke)
            (ke) edge (kf)

              (kb) edge (a1)
              (kb) edge (a2)
               (a1) edge (b1)
   (a1) edge (kd)
   (a2) edge (b2)
  (a2) edge (kd)
  (kc) edge (b1)
  (kc) edge (b2)
  (b1) edge (ke)
  (b2) edge (ke)
   ;
  \end{tikzpicture}
  \caption{$0$ negative exponent}\label{check7}
  \end{figure}
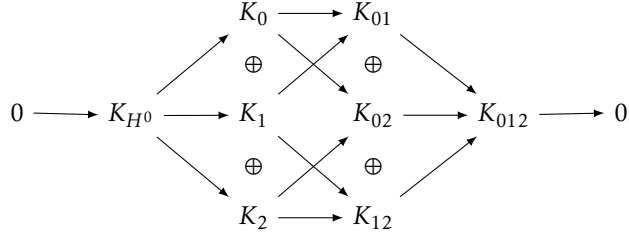

  Consider the SES of complex as in Figure \ref{check8}  . The top and bottom row come from the $1$ negative exponent case, thus giving zero cohomology. The SES of complex gives LES of cohomology groups, since top and bottom row have zero cohomology, so does the middle.

  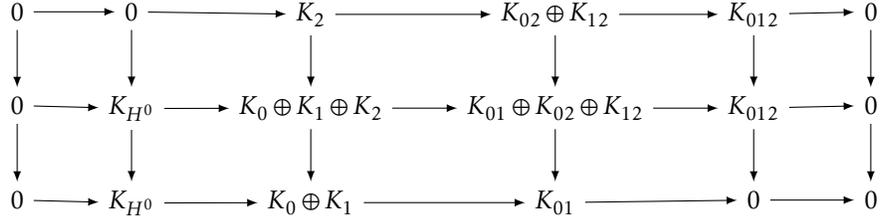
\begin{figure}[H]
  \centering
   \begin{tikzpicture}
   []
          \matrix (m) [
              matrix of math nodes,
              row sep=2em,
              column sep=2.5em,
                     ]
  {   |[name=aa]|0 &  |[name=ab]|0 &  |[name=a1]|K_2 & |[name=b1]|K_{02}\oplus K_{12} &  |[name=c1]|K_{012}& |[name=d1]|0\\
   |[name=ka]| 0 &|[name=kb]| K_{H^0} & |[name=kc]|K_0\oplus K_1\oplus K_2 &|[name=kd]|K_{01}\oplus K_{02} \oplus K_{12} &|[name=ke]|K_{012}&|[name=kf]|0\\
   |[name=qa]| 0 &  |[name=qb]|K_{H^0} & |[name=a2]| K_0\oplus K_1& |[name=b2]|K_{01} & |[name=c2]|0& |[name=d2]|0\\
          };

          \path[overlay,->, font=\scriptsize,>=latex]
  (aa) edge (ab)
  (ab) edge (a1)
  (a1) edge (b1)
  (b1) edge (c1)
  (c1) edge (d1)
          (ka) edge (kb)
          (kb) edge (kc)
          (kc) edge (kd)
          (kd) edge (ke)
          (ke) edge (kf)
  (qa) edge (qb)
  (qb) edge (a2)
  (a2) edge (b2)
  (b2) edge (c2)
  (c2) edge (d2)

  (aa) edge (ka)
  (ab) edge (kb)
  (a1) edge (kc)
  (b1) edge (kd)
  (c1) edge (ke)
  (d1) edge (kf)

          (ka) edge (qa)
          (kb) edge (qb)
          (kc) edge (a2)
          (kd) edge (b2)
          (ke) edge (c2)
  	    (kf) edge (d2)

   ;
  \end{tikzpicture}
  \caption{SES of Complex}\label{check8}
    \end{figure}
  \end{description}

\end{proof}
\subsection{Kunneth Formula}\label{k1} We can produce a complex for $\ds{P}^n\times \ds{P}^m$ by taking tensor product of the corresponding \v{C}ech complex associated with each space, and by the Theorem of Eilenberg-Zilber we get
{\small
\begin{equation}
H^i(\ds{P}^n\times \ds{P}^m,\curly{O}(a,b))=\sum_{j=0}^iH^j(\ds{P}^n,\curly{O}(a))\otimes H^{i-j}(\ds{P}^m,\curly{O}(b))~~a,b\in \ds{Q}
\end{equation}
}
Furthermore, we can define a cup product following \cite[pp 194]{liu2002algebraic} to get a homomorphism
{\small
\begin{equation}
\smile:H^p(\ds{P}^n,\curly{O}(a)) \times H^q(\ds{P}^m,\curly{O}(b))\ra H^{p+q}(\ds{P}^n\times\ds{P}^m,\curly{O}(a,b))~~a,b\in \ds{Q}
\end{equation}

}

\section{Tangent Space}\label{tangent-space}

The tangent space is defined as its analogue in differential geometry.

\begin{define}
  Let $f$ be an irreducible polynomial of rational degree in $k[X_1,\ldots, X_n]_{\ds{Q}}$, then the tangent space at $P=(a_1,\ldots,a_n)\in V(f)$ is defined as
  \begin{equation}
    T_P(V):=\sum_{i=1}^n\frac{\partial f}{\partial X_i}(P)(X_i-a_i)=0.
  \end{equation}
\end{define}
The maximal ideal of the ring $k[X_1,\ldots,X_n]_{\ds{Q}}$ corresponding to a point $(a_1,\ldots,a_n)$ is given as $\id{m}$ below (also shown in remark \ref{maximal-ideal}).

\begin{equation}
  \id{m}:=\{X_1^{1/i}-a_1^{1/i},\ldots,X_n^{1/i}-a_n^{1/i}\}_{i\in\ds{Z}_{>0}},\quad\text{and}\quad \id{m}=\id{m}^2,\qquad\text{implying}\qquad \id{m}/\id{m}^2=0.
\end{equation}
Thus, it is not possible to express the tangent space as dual of $\id{m}/\id{m}^2$.
\subsection{Jacobian Criterion}

If the irreducible hypersurface is generated by $(f_1,\ldots,f_r)\in k[X_1,\ldots,X_n]_{\ds{Q}}$ then the tangent space is given as the cokernel of the map $k^r\ra k^n$, where the map is the Jacobian matrix.
\begin{equation}
  \begin{pmatrix}
    \dfrac{\partial f_1}{\partial x_1}(p)&\cdots &\dfrac{\partial f_r}{\partial x_1}(p)\\
    \vdots&\ddots &\vdots\\
    \dfrac{\partial f_1}{\partial x_n}(p)&\cdots &\dfrac{\partial f_r}{\partial x_n}(p)
  \end{pmatrix}.
\end{equation}

\section{Real Degrees} Every real number can be approximated by a sequence of rational numbers, for example take the decimal expansion for the real number. This allows construction of real degree polynomials obtained by a sequence of rational degree polynomials. Let $j\in\ds{R}_{\geq 0}$ be approximated by a sequence of rational numbers $\{r_1,r_2,\ldots\}$, then the polynomial $X^j-t$ can be obtained as a limit from the sequence of polynomials $X^{r_1}-t, X^{r_2}-t, \ldots$. Hence, it makes perfect sense to talk about polynomials of the form
$X^{\sqrt{2}}-1$.

Let the one variable ring of polynomials with real degree be denoted as $k[X]_{\ds{R}}$, then the basis is uncountable and formed by $\{X^r\}_{r\in\ds{R}_{\geq 0}}$, thus the standard proof of Nullstellensatz does not apply. But, it is still possible to talk about zeros of such a polynomial in terms of affine algebraic sets. For example the zero of $X^{\sqrt{2}}-\sqrt{5}^{\sqrt{2}}$ is $\sqrt{5}$.

The definitions can be carried over word for word, but it is not clear what would be the corresponding theorems. Some results are clear as the day, for example formation of projective space with real degree, the corresponding localization at $\{1,T_i,T_i^2,\ldots\}$ to get to affine space or simply localization at the multiplicatively closed set $\{1,f,f^2,\ldots\}$.
 These real degrees can be immensely useful in mirror symmetry.

\section{Applications}\label{sec-app}

In this section applications of rational degree are discussed. All of the applications are not possible in integer degree.

\subsection{Embeddings into projective space} Innumerable embeddings of rational degree in the projective space can be constructed. An example is given below, the reader is encouraged to come up with their own examples.

\begin{example} The following is a degree one embedding over an algebraically closed field $k$.
\begin{equation}
  \begin{aligned}
    [x:y]&\ra[x:x^{1/2}y^{1/2}:y]\\
    [x:y]&\ra[x:x^{1/3}y^{2/3}:x^{1/2}y^{1/2}:x^{2/3}y^{1/3}:y]\\
    [x:y]&\ra[x:x^{1/4}y^{3/4}:x^{1/2}y^{1/2}:x^{3/4}y^{1/4}:y]\\
    [x:y]&\ra[x:x^{1/5}y^{4/5}:x^{2/5}y^{3/5}:x^{1/2}y^{1/2}:x^{3/5}y^{2/5}:x^{4/5}y^{1/5}:y]\\
    \vdots&\qquad\qquad\qquad\vdots\qquad\qquad\qquad\vdots
      \end{aligned}
\end{equation}
Each space comes out of $[x:y]\in \ds{P}^1$, but an infinite tower of spaces can be constructed on the right hand side with one space sitting inside the other space, just like Matryoshka dolls sitting inside each other.
\end{example}

\subsection{Perfectoid Rings}

Let $K$ be a perfectoid field and $\nfld_K$ be its ring of integers which is $p$ adically complete (as introduced in \cite{scholze_1}). Thus, $p$ adic completion of $\nfld_K$ again gives $\nfld_K$ and the neighborhood
of zero is generated by the prime $p$. For example, $K=\ds{Q}_p(p^{1/p^\infty})\pht$ and $\nfld_k=\ds{Z}_p[p^{1/p^\infty}]\pht=\cup_{i\in\ds{N}}\ds{Z}_p[p^{1/p^i}]\pht$ where $\pht$ denotes the $p$ adic completion.

In this section restricted power series $K\lr{X^{1/p^\infty}}$ are constructed via direct limit as introduced in this manuscript. The original construction as given in \cite{scholze_1} is via inverse limit perfection.

Following the construction \ref{constr-dir-lim-2-var} consider the ring of polynomials with degree $\ds{Z}[1/p]$ and coefficients in $\nfld_K$
\begin{equation}
  \nfld_K[X^{1/p^\infty}]:=\nfld_K[X]_{\ds{Z}[1/p]}=\varinjlim_i\nfld_K[X,\ldots,X^{1/p^i}] .
\end{equation}
The ring $\nfld_K[X^{1/p^\infty}]$ can be completed with respect to prime $p$ to obtain the restricted power series ring $\nfld_K\lr{X^{1/p^\infty}}$. The construction of restricted power series is shown in  \cite[pp 213, \S 4.2]{n1998commutative}.

Similarly, the ring $\nfld_K[X_0^{1/p^\infty},\ldots, X_n^{1/p^\infty}]:=\nfld_K[X_0,\ldots, X_n]_{\ds{Z}[1/p]}$ can be completed $p$ adically (via inverse limit $\varprojlim$) to give
\begin{equation}
  \nfld_K\lr{X_0^{1/p^\infty},\ldots, X_n^{1/p^\infty}}:=\varprojlim\varinjlim_i\nfld_K[X_0,\ldots,X_0^{1/p^i},\ldots,X_n,\ldots X_n^{1/p^i}].
\end{equation}
More concretely the ring $\nfld_K\lr{X_0^{1/p^\infty},\ldots, X_n^{1/p^\infty}}$ consists of power series whose coefficients converge $p$ adically to zero as given below.
\begin{equation}
\sum_{j\in\ds{Z}[1/p]^n_{\geq0}}a_{i_1\ldots i_n}X_0^{i_1}\cdots X_n^{i_n},~~ a_j\in \nfld_K, ~~\lim_{j\ra\infty}\abs{a_j}=0,
\end{equation}
where $j$ is a multi index representing the tuple $(i_1,\ldots, i_n)$. Taking the generic fiber gives the ring $K\lr{X^{1/p^\infty}}=K\otimes_{\nfld_K}{\nfld_K}\lr{X^{1/p^\infty}}=K\lr{X^{1/p^\infty}} $.

 The approach in this paper also allows  construction of $\nfld_K\lr{X}_{\ds{Z}[1/d]}$ or $\nfld_K\lr{X}_{\ds{Q}}$ obtained from $p$ adic completion of $\nfld_k[X]_{\ds{Z}[1/d]}$ and $\nfld_K[X]_\ds{Q}$ respectively. These rings cannot be constructed via inverse limit perfection. Furthermore, taking the generic fiber or inverting $p$ gives
\begin{equation}
  K\otimes_\nfld_K \nfld_K\lr{X}_{\ds{Z}[1/d]} = K\lr{X}_{\ds{Z}[1/d]}\quad\text{ and }\quad K\otimes_\nfld_K \nfld_K\lr{X}_\ds{Q} = K\lr{X}_\ds{Q} .
\end{equation}

The direct limit approach allows the use Grothendieck's and Raynaud's Theory of formal schemes to construct perfectoid spaces, rather than using Huber's approach as done by \cite{scholze_1}. This work is being done in \cite{beditiltmay2019}.

\subsection{Rees Algebra}

Let $\ds{Z}_p$ be the ring of $p$ adic integers, then the Rees Algebra (blow up algebra) associated with the ideal $\lr{p}$ is given as
\begin{equation}
  \bigoplus_{n\geq 0}p^n=\ds{Z}_p\oplus p \oplus p^2\oplus\ldots.
\end{equation}
Reducing the above $\mod p$ destroys the Rees algebra giving only $\ds{F}_p$. This destruction can be avoided if constructions in this manuscript are used.

The ring $\ds{Z}_p[X]_{\ds{Q}}$ gives the ring $\ds{Z}_p[p]_{\ds{Q}}$ by setting $X=p$. Let us now define the modified fractional blow up algebra as
\begin{equation}
\ds{Z}_p\oplus\left(\bigoplus_{n\in\ds{Q}_{>0}}p^n\right).
\end{equation}

Reducing the above $\mod p$ does not destroy the blow up algebra completely. The example above carries verbatim to a Discrete Valuation Ring with uniformizing parameter $\varpi$.

\pagebreak

\bibliographystyle{alpha}
\bibliography{\myreferences}

\end{document}